\allowbreak\title{\bf Singular perturbation analysis for a coupled KdV-ODE system$^{1}$}
\author{Swann Marx$^{2}$ and Eduardo Cerpa$^{3}$}
\newtheorem{theorem}{Theorem}
\newtheorem{lemma}[theorem]{Lemma}
\newtheorem{proposition}[theorem]{Proposition}
\theoremstyle{definition}
\newcommand{\suchthat}{\ifnum\currentgrouptype=16 \mathrel{}\middle|\mathrel{}\else\mid\fi}
\newcommand{\customlabel}[2]{%
   \protected@write \@auxout {}{\string \newlabel {#1}{{#2}{\thepage}{#2}{#1}{}} }%
   \hypertarget{#1}{#2}
}
\begin{document}

\setlist[enumerate]{label={(\alph*)}}
\setlist[enumerate, 2]{label={(\alph{enumi}-\roman*)}}
\newlist{listhypo}{enumerate}{1}
\setlist[listhypo]{label={\textup{(H\arabic*)}}, ref={\textup{(H\arabic*)}}}

\footnotetext[1]{This work has been partially supported by ANID Millennium Science Initiative
Program through Millennium Nucleus for Applied Control and Inverse Problems NCN19-161 and STIC-Amsud project C-CAIT.}
\footnotetext[2]{LS2N, \'Ecole Centrale de Nantes \& CNRS UMR 6004, F-44000 Nantes, France. E-mail: swann.marx@ls2n.fr}
\footnotetext[3]{Instituto de Ingenier\'ia Matem\'atica y Computacional, Facultad de Matem\'aticas, Pontificia Universidad Católica de Chile, Avda.
Vicu\~na Mackenna 4860, Macul, Santiago, Chile. E-mail: eduardo.cerpa@uc.cl}
\maketitle
\abstract{Asymptotic stability is with no doubts an essential property to be studied for any system. This analysis often becomes very difficult for coupled systems and even harder when different time-scales appear. The singular perturbation method allows to decouple a full system into what are called the reduced order system and the boundary layer system, to get simpler stability conditions for the original system. In the infinite-dimensional setting, we do not have a general result making sure this strategy works. This papers is devoted to this analysis for some systems coupling the Korteweg-the Vries equation and an ordinary differential equation with different time-scales. More precisely, We obtain stability results and Tikhonov-type theorems.}

\vspace{0.2 cm}

{\bf Keywords:} Dispersive systems, time scales, perturbation, stability


\section{Introduction}


This paper is devoted to the stability analysis of a system composed by a Korteweg-de Vries (for short KdV) equation coupled with a scalar ordinary differential equation (ODE) with different time scales. Such a situation may appear when the control (appearing in the ODE) can only be used through a dynamics (given by the ODE), and when one of the equations is faster than the other one. More precisely, we are interested in the system

\begin{equation}
\label{eq:fast_KdV}
\left\{
    \begin{array}{cl}
        &\varepsilon y_t + y_x + y_{xxx}=0,\: (t,x)\in \mathbb R_+\times [0,L],\\
        &y(t,0) = y(t,L) = 0,\: t\in \mathbb R_+,\\
        &y_x(t,L) = a z(t),\: t\in\mathbb R_+,\\
        &y(0,x) = y_0(x),\: x\in [0,L],\\
        &\dot z(t) = b z(t) + c y_x(t,0),\: t\in \mathbb R_+,\\
        &z(0) = z_0,
    \end{array}
    \right.
\end{equation}
and the system
\begin{equation}
\label{eq:fast_ODE}
\left\{
    \begin{array}{cl}
        &y_t + y_x + y_{xxx}=0,\: (t,x)\in \mathbb R_+ \times [0,L],\\
        &y(t,0) = y(t,L) = 0, t\in \mathbb R_+,\\
        &y_x(t,L) = a z(t),\: t\in\mathbb R_+,\\
        &y(0,x) = y_0(x),\: x\in [0,L]\\
        &\varepsilon \dot z(t) = b z(t) + c y_x(t,0),\: t\in \mathbb R_+,\\
        &z(0) = z_0,
    \end{array}
    \right.
\end{equation}
where $a,b,c\in \mathbb R$ and $\varepsilon>0$. The parameter $\varepsilon$ is supposed to be small, meaning that in \eqref{eq:fast_KdV} the KdV equation is faster than the ODE, and in \eqref{eq:fast_ODE}, the ODE is faster than the KdV equation. To analyze these systems from an asymptotic stability viewpoint, we will follow techniques borrowed from the singular perturbation literature (see e.g., \cite{kokotovic1999singular,khalil2002nonlinear} for the finite-dimensional case, \cite{cerpa2019singular,tang2017stability,tang2015tikhonov} for the infinite-dimensional case). Roughly speaking, this technique proposes to decouple the full system into two approximated systems assuming that $\varepsilon$ is sufficiently small. The approximated slow system is called the reduced order system while the approximated fast one is called the boundary layer system. It is known that, in the finite-dimensional case, if both systems are asymptotically stable, then the full-system is asymptotically stable as well for sufficiently small $\varepsilon$. In general, this is no longer the case in the infinite dimensional case, as illustrated in \cite{tang2017stability,cerpa-prieur2017} for some hyperbolic equations coupled with an ODE. Therefore, the singular perturbation techniques become very challenging for infinite-dimensional systems, even in the linear case.

Regarding the partial differential part of our systems, we note that even in the case where the KdV equation is not coupled with any ODE, the asymptotic stability analysis is not trivial at all. Indeed, if $L\in \mathcal N$, with 
\begin{equation}
    \mathcal{N}:=\left\{2\pi\sqrt{\tfrac{k^2+kl+l^2}{3}}:k,l\in\mathbb N\right\},
\end{equation}
then the equilibrium point $0$ of the KdV equation becomes stable, but not attractive, while, if $L\notin \mathcal{N}$, $0$ is exponentially stable. In fact, this is linked to a lack of observability. With Neumann boundary control (i.e., a control that is acting on $y_x(t,L)$), the system is not controllable if $L\in \mathcal{N}$, as shown in \cite{rosier}. However, when looking at the nonlinear version of the KdV equation, one has better controllability results for any $L\in \mathcal N$ \cite{corcre,cerpa2007, cerpa2009boundary,coron2020small} and better stability results for some $L\in \mathcal N$
 \cite{chu2015asymptotic,coron2017local,tang2018asymptotic,nguyen2021}. In addition to these interesting results, let us mention \cite{cerpacoron,marxcerpa,tang2015stabilization,lucoron,ayadi2018exponential} which propose to apply the backstepping method on the KdV equation with various boundary control, \cite{cerpaem} where a feedback-law is designed thanks to a Gramian methodology, \cite{marx2017global} which deals with a saturated distributed control, \cite{cerpa2014,rosierzhang2009} which propose both a survey about the Kdv equation, or \cite{balogoun2021iss} where a PI controller is designed to achieve output regulation. This latter article is interesting because it is based on the forwarding method (see e.g., \cite{mazenc1996adding} for the finite-dimensional case, and \cite{terrand2019adding,marx2021forwarding,vanspranghe2022output} for some extensions to the infinite-dimensional case), which requires the existence of an ISS-Lyapunov functional (see e.g., \cite{mironchenko2020input} for an introduction on ISS). In \cite{balogoun2021iss}, an ISS-Lyapunov functional is built thanks to some strictification technique borrowed from \cite{praly2019} at the price of assuming that $L\notin \mathcal{N}$. This Lyapunov functional, which was not available before \cite{balogoun2021iss} will be crucial to analyze \eqref{eq:fast_KdV} and \eqref{eq:fast_ODE} following the classical procedure of the singular perturbation analysis. Hence, all along the paper, we will assume that 
\begin{equation}
\label{eq:assume}
L\notin \mathcal{N}.
\end{equation}

In this article, we have several contributions. First, for each coupled system \eqref{eq:fast_KdV} and \eqref{eq:fast_ODE}, we propose some conditions on the parameters $a$, $b$ and $c$ so that the exponential stability is ensured for any $\varepsilon>0$. For each of the systems, different conditions will be given, because we are going to use different Lyapunov functionals for \eqref{eq:fast_KdV} and \eqref{eq:fast_ODE}. 
Second, for each coupled system \eqref{eq:fast_KdV} and \eqref{eq:fast_ODE}, we apply the singular perturbation analysis to find he boundary layer system and the reduced order system. The stability of these subsystems will imply the stability of the original system as soon as $\varepsilon$ is small enough. Third, for each coupled system \eqref{eq:fast_KdV} and \eqref{eq:fast_ODE}, we provide an analysis of the asymptotic behavior of the solutions with respect to $\varepsilon$ by obtaining some Tikhonov theorems. To the best of our knowledge, this is the first time that a singular perturbation analysis is applied on a KdV equation, from a control viewpoint. 

This article is divided into five sections. Section \ref{sec:well-posed} is devoted to state and prove the well-posedness and stability results for \eqref{eq:fast_KdV} and \eqref{eq:fast_ODE} for any value of the parameter $\varepsilon$. In Section \ref{sec:fast_KdV} and Section \ref{sec:fast_ODE} we provide an asymptotic analysis of \eqref{eq:fast_KdV} and \eqref{eq:fast_ODE}, respectively, by applying singular perturbation analysis for small values of the parameter $\varepsilon$. Section \ref{sec_conclusion} collects some concluding remarks. Appendix \ref{sec_ISS} recalls a crucial result borrowed from \cite{balogoun2021iss} for the KdV equation subject to disturbances.


\section{Analysis for any value of $\varepsilon$}\label{sec:well-posed}

\subsection{Well-posedness}

This short section deals with the well-posedness of \eqref{eq:fast_KdV} and \eqref{eq:fast_ODE} for any parameter $a$, $b$, $c$ and $\varepsilon$. We state and prove that there exists a unique solution to both equations. Our proof relies on classical semigroup arguments. Without loss of generality, we assume that $\varepsilon=1$, because, in the well-posedness proof, this parameter does not play any role. Thus, we can deal in a unified way with both systems \eqref{eq:fast_KdV} and \eqref{eq:fast_ODE} studying  
\begin{equation}
\label{eq:single}
\left\{
    \begin{array}{cl}
        &y_t + y_x + y_{xxx}=0,\: (t,x)\in \mathbb R_+\times [0,L]\\
        &y(t,0) = y(t,L) = 0,\: t\in \mathbb R_+\\
        &y_x(t,L) = a z(t),\: t\in\mathbb R_+\\
        &y(0,x) = y_0(x),\: x\in [0,L]\\
        &\dot z(t) = b z(t) + c y_x(t,0),\: t\in \mathbb R_+\\
        &z(0) = z_0.
    \end{array}
    \right.
\end{equation}

\begin{theorem}
Let $a,b,c\in\mathbb R$. For any initial condition $(y_0,z_0)\in H^3(0,L)\times \mathbb R$ satisfying the compatibility conditions $y_0(0) = y_0(L) = 0$ and $y_0^\prime(L) = a z_0$, there exists a unique strong solution $y\in C(\mathbb R_+;H^3(0,L))\cap C^1(\mathbb R_+;L^2(0,L))$ of \eqref{eq:single}. Additionally, for any initial condition $(y_0,z_0)\in L^2(0,L)\times \mathbb R$ 
there exists a unique weak solution $y\in C(\mathbb R_+;L^2(0,L))$ to system \eqref{eq:single}.

\end{theorem}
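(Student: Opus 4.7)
The plan is to recast \eqref{eq:single} as an abstract Cauchy problem $\dot U = \mathcal A U$ on the Hilbert space $X := L^2(0,L)\times\mathbb R$ with unknown $U = (y,z)$, and then to invoke the Lumer--Phillips theorem to produce a quasi-contractive $C_0$-semigroup. Concretely, I would define
\[
\mathcal A (y,z) := \bigl(-y_x - y_{xxx},\; bz + c y_x(0)\bigr),
\]
on the domain
\[
D(\mathcal A) := \bigl\{(y,z)\in H^3(0,L)\times\mathbb R \,:\, y(0)=y(L)=0,\ y_x(L) = a z\bigr\},
\]
and equip $X$ with a weighted inner product $\langle (y,z),(\tilde y,\tilde z)\rangle_X = \int_0^L y\tilde y\,dx + \alpha z\tilde z$, where $\alpha>0$ is to be chosen. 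Note that the coupling condition $y_x(L)=az$ is absorbed into $D(\mathcal A)$, making the full system an autonomous evolution equation despite its mixed PDE/ODE structure.

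The first step is a quasi-dissipativity estimate. Integration by parts together with $y(0)=y(L)=0$ yields, for every $(y,z)\in D(\mathcal A)$,
\[
\langle \mathcal A(y,z),(y,z)\rangle_X \;=\; \tfrac12\bigl(a^2 z^2 - y_x(0)^2\bigr) + \alpha b z^2 + \alpha c\, z\, y_x(0).
\]
A Young inequality applied to the cross term $\alpha c z y_x(0)$ absorbs the $y_x(0)^2$ contribution, leaving a quadratic form in $z$ alone that is bounded by $\omega \|(y,z)\|_X^2$ for $\omega = \omega(a,b,c,\alpha)$ large enough; in particular the sign of $b$ plays no role here. Hence $\mathcal A - \omega I$ is dissipative on $X$.

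The second step is surjectivity of $\lambda I - \mathcal A$ for some (hence all sufficiently large) $\lambda > \omega$. Given $(f,g)\in X$, I need to solve
\[
\lambda y + y_x + y_{xxx} = f,\qquad y(0) = y(L) = 0,\qquad y_x(L) = a z,
\]
coupled with the scalar equation $(\lambda - b)z - c y_x(0) = g$. Treating $z$ as a parameter, the third-order boundary value problem in $y$ admits a unique $H^3(0,L)$ solution by the standard resolvent analysis for the boundary-controlled KdV operator (in the spirit of \cite{rosier}), and its trace $y_x(0)$ depends affinely on $(f,z)$. Plugging back into the scalar equation yields an affine equation for $z$ that is uniquely solvable once $\lambda$ is taken large, producing $(y,z)\in D(\mathcal A)$.

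The Lumer--Phillips theorem then provides a quasi-contractive $C_0$-semigroup $(T(t))_{t\ge 0}$ on $X$. For $(y_0,z_0)\in D(\mathcal A)$ the stated compatibility conditions are exactly those defining $D(\mathcal A)$, and standard semigroup theory delivers a strong solution $U\in C(\mathbb R_+;D(\mathcal A))\cap C^1(\mathbb R_+;X)$, whence the desired $y\in C(\mathbb R_+;H^3(0,L))\cap C^1(\mathbb R_+;L^2(0,L))$ regularity. For general $(y_0,z_0)\in X$ the formula $U(t):=T(t)(y_0,z_0)$ defines the unique weak solution in $C(\mathbb R_+;X)$, giving $y\in C(\mathbb R_+;L^2(0,L))$; uniqueness of both classes of solutions follows directly from the quasi-dissipativity estimate applied to the difference of two solutions. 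The main technical point I anticipate is the surjectivity step, because the coupling $y_x(L)=az$ entangles the PDE and ODE resolvent problems and one must check that the resulting linear equation in $z$ remains nondegenerate; this is done by an explicit representation of the KdV resolvent and is the only place where any real computation is required.
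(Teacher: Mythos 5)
Your proposal is correct and follows the same overall architecture as the paper: same operator $\mathcal A$, same domain encoding the coupling $y_x(L)=az$, and the same quasi-dissipativity estimate obtained by integration by parts plus Young's inequality (your constants $\tfrac12(a^2z^2-y_x(0)^2)$ are in fact the correct ones). Where you diverge is in how the maximality half of Lumer--Phillips is verified. The paper never touches the resolvent equation: it computes the adjoint explicitly, $D(\mathcal A^*)=\{(y,z): y(0)=y(L)=0,\ y'(0)=cz\}$ with $\mathcal A^*(y,z)=(-y'-y''',\ bz+ay'(L))$, shows that $\mathcal A^*-\omega I$ is also dissipative by the symmetric computation, and invokes the corollary of Lumer--Phillips for closed densely defined operators whose adjoint is likewise dissipative. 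You instead prove surjectivity of $\lambda I-\mathcal A$ directly, by freezing $z$, solving the third-order boundary value problem, and closing the loop through the scalar equation. Both routes are legitimate; the paper's buys a purely algebraic verification (two integrations by parts, no ODE solving), while yours requires the resolvent analysis you flag as the main technical point. On that point, note that the nondegeneracy of your scalar equation for $z$ can be obtained for free: writing $y=y_1+zy_2$ with $y_2$ solving the homogeneous equation with $y_2(0)=y_2(L)=0$, $y_2'(L)=a$, a vanishing coefficient $\lambda-b-c\,y_{2,x}(0)=0$ would produce the nonzero kernel element $(y_2,1)$ of $\lambda I-\mathcal A$, contradicting the injectivity already guaranteed by your dissipativity estimate for $\lambda>\omega$; so no explicit representation of the KdV resolvent is actually needed beyond unique solvability of the $y_2$ problem.
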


\begin{proof}
 Applying \cite[Corollary 2.2.3]{curtain2020introduction}, we will prove the well-posedness of \eqref{eq:single}. To do so, we focus on the operator
$$
\mathcal{A}:\: D(\mathcal{A})\subset L^2(0,L)\times \mathbb R\rightarrow L^2(0,L)\times \mathbb R,
$$
where $D(\mathcal{A}):=\lbrace (y,z)\in H^3(0,L)\times \mathbb R\mid y(0) = y(L),\: y^\prime(L) = az\rbrace$ and 
\begin{equation}
\mathcal{A}\begin{pmatrix}
y \\ z
\end{pmatrix}= \begin{pmatrix}
    -y^\prime - y^{\prime\prime\prime} \\  bz + cy^\prime(0)
\end{pmatrix}.
\end{equation}
Our goal is to prove that there exists $\omega>0$ such that $\mathcal{A}-\omega \mathrm{I}_{L^2(0,L)}$ and its adjoint operator generate a strongly continuous semigroup of contractions, where $\mathrm{I}_{L^2(0,L)}$ denotes the identity operator in $L^2(0,L)$. As explained in \cite[Corollary 2.3.3]{curtain2020introduction}, and noticing moreover that $\mathcal{A}$ is a closed operator, such a condition is sufficient to prove that $\mathcal{A}$ generates a strongly continuous semigroup. Consider in $L^2(0,L)\times \mathbb R$ the scalar product
\begin{equation}
    \left\langle \begin{pmatrix}
        y_1 \\ z_1
    \end{pmatrix}, \begin{pmatrix}
        y_2 \\ z_2
    \end{pmatrix}\right\rangle = \int_0^L y_1 y_2 dx + z_1z_2.
\end{equation}
Doing some integrations by parts, one obtains, for all $(y,z)\in D(\mathcal{A})$
\begin{equation}
\left \langle \mathcal{A} \begin{pmatrix}
    y \\ z
\end{pmatrix},\begin{pmatrix}
    y \\ z
\end{pmatrix}\right\rangle = 2a^2 z^2 - 2y^\prime(0)^2 + 2b z^2 + 2cz y^\prime(0).
\end{equation}
Using Young's Lemma one obtains, for $(y,z)\in D(\mathcal{A})$
\begin{equation}
\left \langle \mathcal{A} \begin{pmatrix}
    y \\ z
\end{pmatrix},\begin{pmatrix}
    y \\ z
\end{pmatrix} \right\rangle \leq 2a^2 z^2 - 2y^\prime(0)^2 + 2b z^2 + 2 \frac{1}{\alpha} c^2 z^2 + 2\alpha y^\prime(0)^2. 
\end{equation}
If one takes $\alpha=\frac{1}{4}$, one can prove easily that there exists a positive constant $C$ such that, for all~$(y,z)\in D(\mathcal{A})$
\begin{equation}
    \left \langle \mathcal{A} \begin{pmatrix}
    y \\ z
\end{pmatrix},\begin{pmatrix}
    y \\ z
\end{pmatrix} \right\rangle \leq C(\Vert y\Vert^2_{L^2(0,L)} + z^2).
\end{equation}
Then, for any $\omega>C$, one has that $\mathcal{A}-\omega \mathrm{I}_{L^2(0,L)}$ is dissipative.

One can prove that the adjoint operator of $\mathcal{A}$, denoted by $\mathcal{A}^*$, is defined as
$$
\mathcal{A}^*:\: D(\mathcal{A}^*)\subset L^2(0,L)\times \mathbb R\rightarrow L^2(0,L)\times \mathbb R,
$$
where $D(\mathcal{A}^*):=\lbrace (y,z)\in H^3(0,L)\times \mathbb R\mid y(0) = y(L),\: y^\prime(0) = cz\rbrace$ and 
\begin{equation}
\mathcal{A}^*\begin{pmatrix}
y \\ z
\end{pmatrix}= \begin{pmatrix}
    -y^\prime - y^{\prime\prime\prime} \\  bz + ay^\prime(L)
\end{pmatrix}.
\end{equation}
Using the same scalar product than before, and performing some integrations by parts, one has, for all $(y,z)\in D(\mathcal{A}^*)$ that
\begin{equation}
\left \langle \mathcal{A}^* \begin{pmatrix}
    y \\ z
\end{pmatrix},\begin{pmatrix}
    y \\ z
\end{pmatrix}\right\rangle = 2c^2 z^2 - 2y^\prime(L)^2 + 2b z^2 + 2az y^\prime(L).
\end{equation}
Again, thanks to the Young's inequality, one can prove that
\begin{equation}
\left \langle \mathcal{A}^* \begin{pmatrix}
    y \\ z
\end{pmatrix},\begin{pmatrix}
    y \\ z
\end{pmatrix}\right\rangle \leq 2c^2 z^2 - 2y^\prime(L)^2 + 2 b z^2 + \frac{2}{\alpha} a^2 z^2 + 2\alpha y^\prime(L)^2.
\end{equation}
Setting $\alpha=\frac{1}{4}$, one can prove that there exists a positive constant $C$ such that, for all $(y,z)\in D(\mathcal{A}^*)$
\begin{equation}
\left \langle \mathcal{A}^* \begin{pmatrix}
    y \\ z
\end{pmatrix},\begin{pmatrix}
    y \\ z
\end{pmatrix}\right\rangle \leq C(\Vert y\Vert^2_{L^2(0,L)} + z^2).
\end{equation}
Then, for any $w>C$, one can prove that $\mathcal{A}^*-\omega \mathrm{I}_{L^2(0,L)}$ is dissipative. Then, applying the Lumer-Phillips Theorem \cite[Corollary 4.4, Chapter 1]{pazy}, one can deduce the result.
\end{proof}

\subsection{Stability conditions for system \eqref{eq:fast_KdV}}
Here we fix any $\varepsilon>0$ and give some conditions on $a,b$ and $c$ such that the origin is globally exponentially stable for system \eqref{eq:fast_KdV}. To do so, we have to first introduce a Lyapunov functional, inspired by \cite{balogoun2021iss}. This Lyapunov functional has been built thanks to the forwarding method, first designed for finite-dimensional systems \cite{mazenc1996adding}, and later extended to some infinite-dimensional systems \cite{balogoun2021iss,marx2021forwarding,terrand2019adding,vanspranghe2022output}. It is defined as
\begin{equation}
\label{eq:Lyapunov-coupled}
V_1(y,z) = \varepsilon W(y) + \frac{1}{2}\left(\varepsilon \int_0^L M(x) y(t,x) dx - z(t) \right)^2,
\end{equation}
where $W$ comes from \cite[Theorem 2.3]{balogoun2021iss} and we recall in the Appendix \ref{sec_ISS}. The function $M$ is the solution to the boundary value problem
\begin{equation}
\label{eq:bdv-M}
\left\{
\begin{array}{cl}
     &  M^{\prime \prime \prime}(x) + M^\prime(x) = 0,\quad x \in (0,L),\\
     & M(0) = M(L) = 0,\quad M^\prime(0) = -c,
\end{array}
\right.
\end{equation}
for which we know an explicit solution
\begin{equation}
\label{eq:explicit-M}
M(x)= 2c\frac{\sin\left(\frac{x}{2}\right)\sin\left(\frac{L-x}{2}\right)}{\sin\left(\frac{L}{2}\right)}\in C^\infty([0,L]).
\end{equation}
This function $M$ is defined through a Sylvester equation, as explained in \cite{balogoun2021iss}. Roughly speaking, the idea of the Lyapunov functional defined in \eqref{eq:Lyapunov-coupled} is to use that the fast system (i.e. the KdV equation) is already exponentially stable without coupling terms and to add a term such that $z$ converges to the $L^2$-norm of $y$ (modulo the function $M$, suitably chosen). This corresponds exactly to the forwarding method. 

As proved in \cite{balogoun2021iss}, this Lyapunov functional is equivalent to the usual norm, i.e. one has the following lemma whose proof is given for the sake of completeness.
\begin{lemma}
\label{lem:V1-norm}
There exist $\overline{\nu}_1$, $\underline{\nu}_1>0$ such that
\begin{equation}
\underline{\nu}_1(\Vert y\Vert^2_{L^2(0,L)}+|z|^2)\leq V_1(y,z) \leq \overline{\nu}_1(\Vert y\Vert^2_{L^2(0,L)} + |z|^2),
\end{equation}
with $\overline{\nu}_1 = \max\left(\varepsilon \overline c+\varepsilon^2 \Vert M\Vert^2_{L^2(0,L)},1\right)$ and $\underline{\nu}_1=\min\left(\frac{\underline{c}\varepsilon}{2}, \frac{1}{2}\frac{\underline c \varepsilon}{\varepsilon^2 \Vert M\Vert^2_{L^2(0,L)}+ \underline c \varepsilon} \right)$. Moreover, for $\varepsilon\leq 1$, one has the existence of a constant $C>0$ such that
\begin{equation}
 \varepsilon (\Vert y\Vert^2_{L^2(0,L)} + |z|^2) \leq  V_1(y,z) \leq C (\Vert y\Vert^2_{L^2(0,L)} + |z|^2).
\end{equation}
\end{lemma}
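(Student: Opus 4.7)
The plan is to treat the two summands of $V_1(y,z)=\varepsilon W(y)+\tfrac12(\varepsilon A(y)-z)^2$ separately, where $A(y):=\int_0^L M(x)\,y(x)\,dx$, and then to combine the resulting pieces through a refined Young inequality. From \cite[Theorem 2.3]{balogoun2021iss} (recalled in Appendix~\ref{sec_ISS}) there exist positive constants $\underline c,\overline c$ such that $\underline c\|y\|_{L^2(0,L)}^2\le W(y)\le \overline c\|y\|_{L^2(0,L)}^2$; moreover the explicit formula \eqref{eq:explicit-M} shows that $\|M\|_{L^2(0,L)}$ is a finite, fixed number.

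For the upper bound I would use the upper estimate on $W$, the Cauchy--Schwarz inequality $|A(y)|\le \|M\|_{L^2(0,L)}\|y\|_{L^2(0,L)}$, and the elementary bound $(a-b)^2\le 2a^2+2b^2$ applied with $a=\varepsilon A(y)$ and $b=z$. This yields
\[
V_1(y,z)\le (\varepsilon\overline c+\varepsilon^2\|M\|_{L^2(0,L)}^2)\,\|y\|_{L^2(0,L)}^2+|z|^2\le \overline\nu_1\,(\|y\|_{L^2(0,L)}^2+|z|^2),
\]
with the announced $\overline\nu_1=\max(\varepsilon\overline c+\varepsilon^2\|M\|_{L^2(0,L)}^2,1)$.

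The lower bound is the delicate step, and the part that I expect to be the real obstacle. I would apply the Young-type inequality $(a-b)^2\ge (1-\theta)b^2-\bigl(\tfrac1\theta-1\bigr)a^2$, valid for every $\theta\in(0,1)$, with $a=\varepsilon A(y)$ and $b=z$, then absorb $A(y)^2$ by $\|M\|_{L^2(0,L)}^2\|y\|_{L^2(0,L)}^2$ using Cauchy--Schwarz. After plugging these inequalities in and using the lower bound on $W$, one arrives at
\[
V_1(y,z)\ge \Bigl[\varepsilon\underline c-\tfrac12\bigl(\tfrac1\theta-1\bigr)\varepsilon^2\|M\|_{L^2(0,L)}^2\Bigr]\|y\|_{L^2(0,L)}^2+\tfrac{1-\theta}{2}\,|z|^2.
\]
The crucial move is to choose $\theta\in(0,1)$ so that the coefficient of $\|y\|_{L^2(0,L)}^2$ is still at least $\tfrac{\varepsilon\underline c}{2}$. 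Solving $\tfrac12(\tfrac1\theta-1)\varepsilon\|M\|_{L^2(0,L)}^2=\tfrac{\underline c}{2}$ gives $\theta=\varepsilon\|M\|_{L^2(0,L)}^2/(\underline c+\varepsilon\|M\|_{L^2(0,L)}^2)$, for which the coefficient of $|z|^2$ equals exactly $\tfrac12\,\underline c\varepsilon/(\varepsilon^2\|M\|_{L^2(0,L)}^2+\underline c\varepsilon)$. Taking the smaller of the two resulting coefficients produces precisely the claimed $\underline\nu_1$.

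Finally, for the specific range $\varepsilon\le 1$ the $\varepsilon$-uniform upper bound is immediate from $\varepsilon\overline c+\varepsilon^2\|M\|_{L^2(0,L)}^2\le \overline c+\|M\|_{L^2(0,L)}^2$, while for the lower bound it suffices to factor $\varepsilon$ out of $\underline\nu_1$ and to observe that the remaining factor is bounded below by a positive constant independent of $\varepsilon\in(0,1]$, yielding the desired estimate of the form $c_0\,\varepsilon(\|y\|_{L^2(0,L)}^2+|z|^2)\le V_1(y,z)$. Everything beyond the $\theta$-optimization is routine bookkeeping with Cauchy--Schwarz and the equivalence of $W$ with the $L^2$-norm.
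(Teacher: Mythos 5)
Your proof is correct and follows essentially the same route as the paper's: the same splitting of $V_1$, the same Young/Cauchy--Schwarz estimates, and the same optimized parameter (your $\theta$ is exactly the paper's $\alpha$ after cancelling one factor of $\varepsilon$), yielding the announced $\overline\nu_1$ and $\underline\nu_1$. You are in fact slightly more complete than the paper on the $\varepsilon\le 1$ case, which the paper leaves implicit; note only that what one actually obtains there is $c_0\varepsilon(\Vert y\Vert^2_{L^2(0,L)}+|z|^2)\le V_1(y,z)$ for a fixed $c_0>0$, which is what the lemma's second display should be read as saying.
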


\begin{proof}
First, using Proposition \ref{prop:ISS-Lyap} in Appendix \ref{sec_ISS} and Young's Lemma we have
\begin{equation}
V_1(y,z) \leq (\varepsilon \overline{c} + \varepsilon^2 \Vert M\Vert^2_{L^2(0,L)}) \Vert y\Vert_{L^2(0,L)} + |z|^2.
\end{equation}
Second, using again Proposition \ref{prop:ISS-Lyap} and Young's Lemma we get
\begin{equation}
V_1(y,z) \geq \underline{c} \varepsilon \Vert y\Vert^2_{L^2(0,L)} + \frac{1}{2}\left(1 -\frac{1}{\alpha}\right)\varepsilon^2\int_0^L M(x)^2 y(t,x)^2 + \frac{1}{2}(1-\alpha) z(t)^2. 
\end{equation}
Choose $\alpha = \frac{\varepsilon^2\Vert M\Vert^2_{L^2(0,L)}}{\varepsilon^2\Vert M\Vert^2_{L^2(0,L)}+\underline c \varepsilon}$. Then, $1-\frac{1}{\alpha}<0$, and one has
\begin{equation}
\begin{split}
&V_1(y,z) \geq \\
& \hspace{0.5cm}\underline c\varepsilon \Vert y\Vert^2_{L^2(0,L)} - \frac{1}{2}\left(\frac{\underline c \varepsilon}{\varepsilon^2 \Vert M\Vert_{L^2(0,L)}}\right) \varepsilon^2 \Vert M\Vert^2_{L^2(0,L)} \Vert y\Vert^2_{L^2(0,L)} +\frac{1}{2}\frac{\underline c \varepsilon}{\varepsilon^2 \Vert M\Vert^2_{L^2(0,L)}+ \underline c \varepsilon} z^2.
\end{split}
\end{equation}
This concludes the proof. 
\end{proof}

We are now ready to state and prove our stability result.

\begin{proposition}
\label{prop:coupled}
For any $\varepsilon>0$, there exist positive constants $a_*$, $k_1$, $k_2$ such that, if $a<a_*$ and $b,c$ satisfy $0<k_1<-(b-ac)<k_2$, then the origin  is globally exponentially stable for system \eqref{eq:fast_KdV}.
\end{proposition}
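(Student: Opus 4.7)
I would compute $\dot V_1$ along trajectories of \eqref{eq:fast_KdV} and show that, under the stated conditions on $a,b,c$, it is bounded above by $-\lambda V_1$ for some $\lambda>0$. The calculation splits along the two pieces of $V_1$: the ISS piece $\varepsilon W(y)$, and the cross piece $\tfrac{1}{2}\zeta^2$ with $\zeta := \varepsilon \int_0^L M(x) y(t,x)\,dx - z(t)$.

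For the ISS piece, I would treat $y_x(t,L) = az(t)$ as a boundary disturbance and invoke the ISS-Lyapunov bound from Proposition \ref{prop:ISS-Lyap}. The explicit $\varepsilon$ multiplying $W$ is precisely what cancels the $1/\varepsilon$ coming from the chain rule via $\varepsilon y_t = -y_x - y_{xxx}$, so one obtains the $\varepsilon$-free bound
\begin{equation*}
\varepsilon \tfrac{d}{dt} W(y) \leq -\mu \Vert y\Vert^2_{L^2(0,L)} + \kappa a^2 z^2,
\end{equation*}
with $\mu,\kappa>0$ depending only on $L$.

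For the cross piece, I would substitute $\varepsilon y_t = -y_x - y_{xxx}$ in $\tfrac{d}{dt}[\varepsilon \int_0^L My\,dx]$ and integrate by parts three times. All boundary terms except $[M'y_x]_0^L$ vanish thanks to $M(0)=M(L)=0$ and $y(0)=y(L)=0$, and the volume integrals $\int M'y\,dx$ and $\int M'''y\,dx$ cancel thanks to $M''' + M' = 0$. Using $y_x(L) = az$, $M'(0) = -c$, and subtracting $\dot z = bz + cy_x(0)$ cancels the $cy_x(0)$ contribution, yielding the clean identity $\dot \zeta = (aM'(L) - b)z$. From \eqref{eq:explicit-M} one reads $|M'(L)|=|c|$; once the sign is fixed so that $aM'(L)-b = -(b-ac)$, one arrives at
\begin{equation*}
\tfrac{d}{dt} \tfrac{1}{2}\zeta^2 = -(b-ac)\Bigl(\varepsilon z \int_0^L M(x)\,y(t,x)\, dx - z^2\Bigr).
\end{equation*}
Under the hypothesis $-(b-ac) > k_1 > 0$ this produces a strictly negative $z^2$ contribution, and Young's inequality on the cross term bounds it by $\tfrac{\varepsilon^2 \Vert M\Vert^2_{L^2(0,L)}}{2\eta}\Vert y\Vert^2_{L^2(0,L)} + \tfrac{\eta}{2}z^2$ for an auxiliary parameter $\eta>0$.

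Summing the two pieces yields
\begin{equation*}
\dot V_1 \leq -\Bigl[\mu - \tfrac{k_2\varepsilon^2 \Vert M\Vert^2_{L^2(0,L)}}{2\eta}\Bigr]\Vert y\Vert^2_{L^2(0,L)} - \Bigl[k_1 - \kappa a^2 - \tfrac{\eta k_2}{2}\Bigr]z^2,
\end{equation*}
and the nested choices $a_* < \sqrt{k_1/(2\kappa)}$, then $\eta < k_1/k_2$, then $k_2$ small enough (allowed to depend on $\varepsilon$, $\mu$, $\eta$, and $\Vert M\Vert^2_{L^2(0,L)}$) give $\dot V_1 \leq -\lambda(\Vert y\Vert^2_{L^2(0,L)} + z^2) \leq -\lambda' V_1$ by Lemma \ref{lem:V1-norm}, which is the claimed global exponential stability. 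The main obstacle is exactly this algebraic balancing: since $\eta$ enters the two brackets through $k_1/k_2$ and $k_2/\eta$, the quantity $-(b-ac)$ must lie in a bounded window rather than merely be positive — this is why both a lower and an upper bound $k_1,k_2$ appear in the hypothesis. A secondary nuisance is the bookkeeping on the sign of $M'(L)$ (equivalently, the sign of $M$ in \eqref{eq:explicit-M}), since a single sign error would flip the condition into $-(b+ac)>0$.
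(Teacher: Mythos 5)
Your proposal is correct and follows essentially the same route as the paper: the same functional $V_1$, the ISS bound of Proposition \ref{prop:ISS-Lyap} with $d_2=az$ absorbing the $1/\varepsilon$, the integration by parts yielding $\dot\zeta=-(b-ac)z$ via $M'''+M'=0$ and $M'(L)=c$, and a Young-inequality balancing that forces $-(b-ac)$ into a bounded window (the paper phrases this via the roots of $\tfrac{X^2}{\alpha}-X+\kappa_2 a^2$ rather than your two-parameter $(\eta,k_2)$ bookkeeping, but it is the same constraint). Your side remark about the sign of $M$ is also well taken: as written, \eqref{eq:explicit-M} gives $M'(0)=+c$ rather than the $-c$ required by \eqref{eq:bdv-M}, so the displayed formula is off by a sign, though the intended $M$ (the solution of \eqref{eq:bdv-M}) does satisfy $M'(L)=c$ as used in the proof.
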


\begin{proof}
Using Proposition \ref{prop:ISS-Lyap}, setting $d_1=0$ and $d_2=z$, the time derivative of $V$ along the strong solutions to \eqref{eq:fast_KdV} yields
\begin{multline}
    \frac{d}{dt} V(y,z) \leq - \lambda \Vert y\Vert^2_{L^2(0,L)} + \kappa_2 a^2 z(t)^2 \\
    + \left(\int_0^L M(y_x(t,x) + y_{xxx}(t,x)) - bz(t) - cy_x(t,0)\right)\left(\varepsilon \int_0^L M y(t,x) dx - z(t)\right).
\end{multline}
After some integration by parts, and using in particular that $M^\prime(L) = c$ thanks to \eqref{eq:explicit-M}, one obtains that for all strong solutions to \eqref{eq:fast_KdV} we have
\begin{multline}
\frac{d}{dt} V_1(y,z) \leq - \lambda \Vert y\Vert^2_{L^2(0,L)} + \kappa_2 a^2 z(t)^2 - (b-ac) z(t)\left(\varepsilon \int_0^L M(x) y(t,x) dx - z(t)\right) \\
\leq  - \lambda \Vert y\Vert^2_{L^2(0,L)} + \kappa_2 a^2 z(t)^2 + (b-ac) z(t)^2 - \varepsilon(b-ac) z(t) \int_0^L M(x) y(t,x) dx.
\end{multline}
Using Young's Lemma, one obtains that, for all strong solutions to \eqref{eq:fast_KdV}
\begin{equation}
\frac{d}{dt} V_1(y,z) \leq (-\lambda + \alpha \varepsilon^2 \Vert M\Vert^2_{L^2(0,L)}) \Vert y\Vert^2_{L^2(0,L)} + \left(\frac{(b-ac)^2}{\alpha} + (b-ac) + \kappa_2 a^2\right) z(t)^2.
\end{equation}
Let us choose $\alpha=\frac{\lambda}{2\Vert M\Vert^2_{L^2(0,L)} \varepsilon^2}$. One has therefore
\begin{equation}
\frac{d}{dt} V_1(y,z) \leq -\frac{\lambda}{2}\Vert y\Vert^2_{L^2(0,L)} + \left(\frac{(b-ac)^2}{\alpha} + (b-ac) + \kappa_2 a^2\right) z(t)^2.
\end{equation}
Let us consider the polynomial $\frac{X^2}{\alpha} - X + \kappa_2 a^2$. If $a^2<\frac{\alpha}{4\kappa_2}$, this polynomial admits two square roots, defined by
$$
X_1 = \frac{\alpha\left(1-\sqrt{1-\frac{4\kappa_2 a^2}{\alpha}}\right)}{2},\: X_2 = \frac{\alpha\left(1+\sqrt{1-\frac{4\kappa_2 a^2}{\alpha}}\right)}{2}.
$$
Then, if $b-ac$ satisfies
$$
X_1<-(b-ac)<X_2,
$$
then, there exists a positive constant $\mu$ such that, for all strong solutions to \eqref{eq:fast_KdV} we have
\begin{equation}
\frac{d}{dt} V_1(y,z) \leq -\mu V_1(y,z).
\end{equation}
Using Lemma \ref{lem:V1-norm} we conclude the proof.
\end{proof}

One might see this result as an extension of the one provided in \cite{balogoun2021iss} where one has $b=0$ and $c=\varepsilon=1$, which corresponds to the case where an integrator is added. In \cite{balogoun2021iss}, it is proved that, for a sufficiently small $a$, the origin of \eqref{eq:fast_KdV} (with $b=0,\: c=\varepsilon=1$) is exponentially stable. Therefore, Proposition \ref{prop:coupled} seems to follow the same line, since $a$ has to be sufficiently small.

\subsection{Stability conditions for system \eqref{eq:fast_ODE}}
In this subsection, a sufficient conditions on $a$, $b$ and $c$ will be found to ensure the stability of \eqref{eq:fast_ODE} for any $\varepsilon>0$. To do so, we use the Lyapunov functional 

\begin{equation}
\label{eq:Lyapunov-fastODE}
V_2(y,z):= -\frac{\varepsilon \kappa_2 a^2}{b} z^2 + W(y),
\end{equation}
where $W$ is the ISS-Lyapunov functional given in Proposition \ref{prop:ISS-Lyap}. The following Lemma states that this Lyapunov functional is equivalent to the usual norm.

\begin{lemma}
\label{lem:Lyapunov-fastODE-equi}
For any $b<0$, defining $\overline{\nu}_2:= \max\left(\bar c,-\frac{\varepsilon \kappa_2 a^2}{b}\right)$ and $\underline{\nu}_2:=\min\left(\underline c,-\frac{\varepsilon \kappa_2 a^2}{b}\right)$, the Lyapunov functional defined in \eqref{eq:Lyapunov-fastODE} satisfies
\begin{equation}
\underline{\nu}_2(\Vert y\Vert^2_{L^2(0,L)} + |z|^2)\leq V_2(y,z) \leq \overline{\nu}_2 (\Vert y\Vert^2_{L^2(0,L)} + |z|^2).
\end{equation}
\end{lemma}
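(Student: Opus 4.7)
The plan is essentially a one-line computation once the equivalence bounds for $W$ from Proposition \ref{prop:ISS-Lyap} are invoked. Since $b<0$, the coefficient $-\varepsilon\kappa_2 a^2/b$ in front of $z^2$ is nonnegative, so $V_2$ is a sum of two nonnegative terms, each of which is already in the desired separated form $(\text{const})\|y\|^2_{L^2(0,L)}$ or $(\text{const})|z|^2$. There is no cross term to absorb via Young's inequality (contrast with the proof of Lemma \ref{lem:V1-norm}), which makes the argument essentially direct.

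First, I would recall that Proposition \ref{prop:ISS-Lyap} in the Appendix provides constants $\underline c,\bar c>0$ with
\begin{equation*}
\underline c\,\|y\|^2_{L^2(0,L)}\ \leq\ W(y)\ \leq\ \bar c\,\|y\|^2_{L^2(0,L)}.
\end{equation*}
Substituting into the definition \eqref{eq:Lyapunov-fastODE} of $V_2$ and using that the coefficient of $z^2$ is positive under $b<0$, one gets immediately
\begin{equation*}
\underline c\,\|y\|^2_{L^2(0,L)} - \tfrac{\varepsilon\kappa_2 a^2}{b}\,|z|^2\ \leq\ V_2(y,z)\ \leq\ \bar c\,\|y\|^2_{L^2(0,L)} - \tfrac{\varepsilon\kappa_2 a^2}{b}\,|z|^2.
\end{equation*}

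Next, I would bound the two coefficients by the minimum (respectively maximum) of the pair, which is exactly the definition of $\underline\nu_2$ and $\overline\nu_2$ stated in the lemma. This yields
\begin{equation*}
\underline\nu_2\bigl(\|y\|^2_{L^2(0,L)}+|z|^2\bigr)\ \leq\ V_2(y,z)\ \leq\ \overline\nu_2\bigl(\|y\|^2_{L^2(0,L)}+|z|^2\bigr),
\end{equation*}
which is the claim. Compared with Lemma \ref{lem:V1-norm}, there is no real obstacle here because $V_2$ does not contain a mixed term of the form $\bigl(\varepsilon\int_0^L M y\,dx - z\bigr)^2$; the only small point worth explicitly mentioning is that $b<0$ (assumed in the lemma) together with $\kappa_2>0$ (from Proposition \ref{prop:ISS-Lyap}) guarantees the positivity of the $z^2$ coefficient, so that both $\underline\nu_2$ and $\overline\nu_2$ are strictly positive whenever $a\neq 0$.
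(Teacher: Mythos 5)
Your proof is correct and follows essentially the same route as the paper's: invoke the two-sided bound $\underline c\,\|y\|^2_{L^2(0,L)}\leq W(y)\leq \bar c\,\|y\|^2_{L^2(0,L)}$ from Proposition \ref{prop:ISS-Lyap}, observe that $b<0$ makes the $z^2$ coefficient nonnegative, and pass to the min/max of the two coefficients. Your added remark that strict positivity of $\underline\nu_2$ requires $a\neq 0$ is a fair observation the paper glosses over, but it does not affect the validity of the stated inequality.
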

\begin{proof}
Using Proposition \ref{prop:ISS-Lyap}, one first has
\begin{equation}
    V_2(y,z) \leq \overline c \Vert y\Vert^2_{L^2(0,L)} - \frac{\varepsilon \kappa_2 a^2}{b} \leq \overline{\nu}_2  (\Vert y\Vert^2_{L^2(0,L)} + |z|^2),
\end{equation}
where $\overline{\nu}_2=\max\left(\bar c,-\frac{\varepsilon \kappa_2 a^2}{b}\right)$. Using again Proposition \ref{prop:ISS-Lyap}, one obtain
\begin{equation}
V_2(y,z) \geq \underline{c} \Vert y\Vert^2_{L^2(0,L)} - \frac{\varepsilon \kappa_2 a^2}{b} \geq \underline{\nu}_2  (\Vert y\Vert^2_{L^2(0,L)} + |z|^2),
\end{equation}
where $\underline{\nu}_2 = \min\left(\underline c,-\frac{\varepsilon \kappa_2 a^2}{b}\right)$. This concludes the proof.\end{proof}

We have now the following result, which states that, for any $\varepsilon>0$, and under suitable conditions on $a,b,c$, the origin  is exponentially stable for system \eqref{eq:fast_ODE}. As explained later on, these conditions differ from the ones collected in Proposition \ref{prop:coupled}. 
\begin{proposition}
\label{prop:coupled2}
Let $\varepsilon>0$. If $b<0$ and $\frac{a^2c^2}{b^2} < \frac{\kappa_3}{4\kappa_2}$, then the origin  is exponentially stable for system \eqref{eq:fast_ODE}.
\end{proposition}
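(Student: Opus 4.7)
The plan is to show $\dot V_2 \leq -\mu(\Vert y\Vert^2_{L^2(0,L)} + |z|^2)$ along strong solutions of \eqref{eq:fast_ODE}, which by Lemma \ref{lem:Lyapunov-fastODE-equi} translates into $\dot V_2 \leq -\rho V_2$ for some $\rho>0$ and hence exponential stability. Differentiating the $W$-part of $V_2$ and invoking Proposition \ref{prop:ISS-Lyap} with the coupling $y_x(t,L)=az$, I expect a bound of the form
\[
\dot W \leq -\lambda \Vert y\Vert_{L^2(0,L)}^2 - \kappa_3 y_x(t,0)^2 + \kappa_2 a^2 z(t)^2,
\]
exactly as in the proof of Proposition \ref{prop:coupled}, but with the critical difference that here the boundary dissipation $-\kappa_3 y_x(t,0)^2$ must be actively used rather than discarded.

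The coefficient $-\varepsilon\kappa_2 a^2/b$ multiplying $z^2$ in $V_2$ is tailored to cancel the bad $+\kappa_2 a^2 z^2$ term coming from $\dot W$. Substituting the fast ODE $\varepsilon\dot z = bz + c y_x(t,0)$ into $-\tfrac{2\varepsilon\kappa_2 a^2}{b}\, z\dot z$ produces
\[
-\tfrac{2\varepsilon\kappa_2 a^2}{b}\, z\dot z \;=\; -2\kappa_2 a^2 z^2 - \tfrac{2\kappa_2 a^2 c}{b}\, z\, y_x(t,0).
\]
The first term dominates $+\kappa_2 a^2 z^2$ from $\dot W$, yielding a net $-\kappa_2 a^2 z^2$; the second is a residual cross term to be controlled.

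The crux is handling this cross term by Young's inequality. Applying $2|uv|\leq u^2/\alpha + \alpha v^2$ with the calibrated weight $\alpha=\kappa_3/4$ gives
\[
\Bigl|\tfrac{2\kappa_2 a^2 c}{b}\, z\, y_x(t,0)\Bigr| \;\leq\; \tfrac{4\kappa_2^2 a^4 c^2}{\kappa_3 b^2}\, z^2 + \tfrac{\kappa_3}{4}\, y_x(t,0)^2.
\]
The boundary contribution is absorbed by $-\kappa_3 y_x(t,0)^2$, leaving a harmless negative remainder $-\tfrac{3\kappa_3}{4}y_x(t,0)^2$ which we simply discard, while the $z^2$ contribution combines with the surviving $-\kappa_2 a^2 z^2$ to produce the coefficient $-\kappa_2 a^2\bigl(1 - \tfrac{4\kappa_2 a^2 c^2}{\kappa_3 b^2}\bigr)$, which is strictly negative precisely under the hypothesis $\frac{a^2c^2}{b^2}<\frac{\kappa_3}{4\kappa_2}$.

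The only genuinely subtle point is this weight choice $\alpha=\kappa_3/4$, which is what produces the exact constant $\kappa_3/4\kappa_2$ appearing in the statement (the naive choice $\alpha=\kappa_3$ would only yield the weaker condition $\frac{a^2c^2}{b^2}<\frac{\kappa_3}{\kappa_2}$ without any spare boundary dissipation to keep). Once this weight is identified, the rest of the proof is a routine Lyapunov computation: one collects $\dot V_2 \leq -\lambda\Vert y\Vert_{L^2(0,L)}^2 - \mu z^2$, invokes the upper bound from Lemma \ref{lem:Lyapunov-fastODE-equi} to convert this into $\dot V_2 \leq -\rho V_2$, and concludes exponential decay in the $L^2(0,L)\times\mathbb R$ norm, which extends from strong to weak solutions by the standard density argument.
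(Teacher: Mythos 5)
Your proof is correct and follows essentially the same route as the paper: the same Lyapunov functional $V_2$, the same application of Proposition \ref{prop:ISS-Lyap} with $d_2=az$, substitution of the fast ODE into the $z^2$-term, and a single Young inequality on the cross term $\tfrac{2\kappa_2 a^2 c}{b}\,z\,y_x(t,0)$. The only difference is your calibration of the Young weight (putting $\kappa_3/4$ on the boundary trace rather than the paper's symmetric $\alpha=1/4$ split), which in fact recovers exactly the constant $\kappa_3/(4\kappa_2)$ appearing in the statement, whereas the paper's fixed choice literally yields the slightly stronger requirement $a^2c^2/b^2<\kappa_3/(8\kappa_2)$.
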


\begin{proof}
 Note that, due to the condition $b<0$, the Lyapunov functional defined in \eqref{eq:Lyapunov-fastODE} is equivalent to the usual norm, invoking Lemma \ref{lem:Lyapunov-fastODE-equi}. Using Proposition \ref{prop:ISS-Lyap} with $d_2=az$, its derivative along \eqref{eq:fast_ODE} yields, for all strong solutions to \eqref{eq:fast_ODE}
\begin{equation}
\frac{d}{dt} V_2(y,z) = -\lambda \Vert y\Vert^2_{L^2(0,L)} + \kappa_2 a^2 |z(t)|^2 - \kappa_3 |y_x(t,0)|^2- 2\kappa_2 a^2 z^2 - 2\frac{a^2 c}{b} \kappa_2 y_x(t,0) z(t).
\end{equation}
Using Young's Lemma, one obtains
\begin{equation}
    \frac{d}{dt} V_2(y,z) \leq -\lambda \Vert y\Vert^2_{L^2(0,L)} - \kappa_2(1-2\alpha) a^2 z^2 - \left(\kappa_3 - 2\frac{a^2 c^2 \kappa_2}{b^2\alpha}\right) |y_x(t,0)|^2
\end{equation}
Setting $\alpha =\frac{1}{4}$, one obtains:
\begin{equation}
 \frac{d}{dt} V_2(y,z) \leq -\lambda \Vert y\Vert^2_{L^2(0,L)} - \frac{\kappa_2}{2} a^2 z^2 - \left(\kappa_3 - \frac{8 a^2 c^2 \kappa_2}{b^2}\right) |y_x(t,0)|^2
\end{equation}
Then, if $\frac{a^2c^2}{b^2} < \frac{\kappa_3}{4\kappa_2}$, and using Lemma \ref{lem:Lyapunov-fastODE-equi}, the desired result holds true, concluding therefore the proof.\end{proof}

Note that the conditions given in Proposition \ref{prop:coupled} are quite different from the ones introduced in Proposition \ref{prop:coupled2}. Indeed, in contrast with Proposition \ref{prop:coupled}, Proposition \ref{prop:coupled2} assumes, with the hypothesis $b<0$, that the ODE is already exponentially stable. As it will be illustrated later on, similar conditions will appear when looking at the reduced order system and the boundary layer system.

\section{Fast KdV equation coupled with a slow ODE}
\label{sec:fast_KdV}
\subsection{Stability for small $\varepsilon$}

The singular perturbation method proposes the decoupling of the different time-scales appearing in the system in order to get some subsystems that hopefully can be studied separately in order to conclude properties of the full system. Thus, we are going to compute the subsystems, namely the reduced order system and the boundary layer system, that are approximations of the KdV equation and the ODE when $\varepsilon$ is closed to $0$. We further prove that the stability conditions for those two systems apply for the full system \eqref{eq:fast_KdV} as soon as $\varepsilon$ is small enough.

\paragraph{Reduced order system.} Finding the reduced order system needs us to suppose that $\varepsilon=0$. One has therefore to study this system
\begin{equation}
\label{eq:equi-point}
\left\{
\begin{array}{cl}
&h_x(t,x) + h_{xxx}(t,x) = 0,\quad t\in \mathbb R_+, x\in(0,L), \\
&h(t,0) = h(t,L) = 0, \quad t\in \mathbb R_+,\\
&h_x(t,L) = az(t), \quad t\in \mathbb R_+,
\end{array}
\right.
\end{equation}
which corresponds to the KdV equation given in \eqref{eq:fast_KdV} when $\varepsilon=0$. There exists an explicit solution to the latter equation given by $$h(t,x) = - 2az(t) \frac{1}{\sin\left(\frac{L}{2}\right)} \sin\left(\frac{x}{2}\right) \sin\left(\frac{L-x}{2}\right),\: \forall (t,x)\in \mathbb{R}_+\times [0,L].$$ 
One can easily check  that $h(t,0) = h(t,L) = 0$, for all $t\geq 0$. Moreover, one has, for all $(t,x)\in \mathbb R_+\times [0,L]$
$$
h_x(t,x) = -az(t)\frac{1}{\sin\left(\frac{L}{2}\right)}\cos\left(\frac{x}{2}\right)\sin\left(\frac{L-x}{2}\right) + az(t) \frac{1}{\sin\left(\frac{L}{2}\right)}\sin\left(\frac{x}{2}\right) \cos\left(\frac{L-x}{2}\right).
$$
One has $h_x(t,0) = -az(t)$ and $h_x(t,L) = az(t)$, for all $t\geq 0$. Moreover, by definition of $h$, one has $h_x(t,x) + h_{xxx}(t,x) = 0$, for all $(t,x)\in \mathbb R_+\times [0,L]$. In the following, we will use the following notation

$$
h(t,x):= -f(x)z(t),\: \text{ where } f(x):=2a\frac{1}{\sin\left(\frac{L}{2}\right)} \sin\left(\frac{x}{2}\right) \sin\left(\frac{L-x}{2}\right).
$$

Therefore, since $h_x(t,0) = -az(t)$, the reduced order system is given by
\begin{equation}
\label{eq:reduced-order}
\left\{
\begin{array}{cl}
     & \dot{\bar z}(t) = (b-ac) \bar z(t), \quad t\in \mathbb R_+,\\
     & \bar z(0) = \bar z_0.
\end{array}
\right.
\end{equation}
In consequence, if $(b-ac)<0$, then the origin of \eqref{eq:reduced-order} is exponentially stable.

\paragraph{Boundary layer system.} Consider $\tau = \frac{t}{\varepsilon}$ and $\bar y(\tau,x):= y(\tau,x) + z(\tau) f(x)$, for all $(\tau,x)\in \mathbb R_+\times [0,L]$. One can check that $$\bar y_\tau(\tau,x) = y_\tau(\tau,x) + \varepsilon\frac{d}{dt} f(x) z(t),$$ for all $(\tau,x)\in \mathbb R_+\times [0,L]$. Setting $\varepsilon=0$ yields $\bar y_\tau(\tau,x) = y_\tau(\tau,x)$. One can check also easily that $\bar y_x(\tau,x) + \bar y_{xxx}(\tau,x) = y_x(\tau,x) + y_{xxx}(\tau,x)$, for all $(\tau,x)\in \mathbb R_+ \times [0,L]$. One has also $\bar y(\tau,0) = \bar y(\tau,L) = 0$ and $\bar y_x(\tau,L) = 0$. Finally, the boundary layer is written as 
\begin{equation}
\label{eq:boundary-layer}
\left\{
\begin{array}{cl}
     & \bar y_\tau(\tau,x) + \bar y_{x}(\tau,x) + \bar y_{xxx}(\tau,x) = 0,\quad \tau\in \mathbb R_+, x\in(0,L),\\
     & \bar y(\tau,0) = \bar y(\tau,L) = 0, \quad \tau\in \mathbb R_+,\\
     & \bar y_x(\tau,L) = 0, \quad \tau\in \mathbb R_+,\\
     & \bar y(0,x) = \bar y_0(x)\quad x\in (0,L).
\end{array}
\right.
\end{equation}
Since $L\notin \mathcal{N}$, the origin  is always exponentially stable for system \eqref{eq:boundary-layer}. 

\paragraph{Full system.} Next result will say that the conditions for the reduced order system and the boundary layer system to be exponentially stable are sufficient for the full-system as soon as $\varepsilon$ is sufficiently small. It is useful to introduce the variable
 $$\tilde y(t,x) = y(t,x) + f(x)z(t),\quad \forall (t,x)\in \mathbb R_+ \times [0,L]. $$ Noticing that $\tilde y_x(t,0) = y_x(t,0) + az(t)$, its dynamics together with the one of $z$ is given by
\begin{equation}
\label{eq:sys-fastKdVthm1}
\left\{
\begin{array}{cl}
     & \varepsilon \tilde y_t + \tilde y_x + \tilde y_{xxx} = -  \varepsilon ((b-ac)z(t) + c\tilde y_x(t,0))f(x)  \\
     & \tilde y(t,0)=\tilde y(t,L) = 0\\
     & \tilde y_x(t,L) = 0\\
     & \tilde y(0,x) = \tilde y_0(x)\\
     &\dot z = (b-ac) z(t) + c\tilde y_x(t,0)\\
     & z(0)=z_0.
\end{array}
\right.
\end{equation}
We can now state and prove the next result. 

\begin{theorem}
\label{thm:fast-KdV1}
For any $a,b,c\in\mathbb R$ such that $(b-ac)<0$, there exists $\varepsilon^*>0$ such that, for every $\varepsilon\in (0,\varepsilon^*)$  the origin is exponentially stable for system \eqref{eq:fast_KdV}.
\end{theorem}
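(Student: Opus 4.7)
The plan is to work in the shifted coordinates $(\tilde y,z)$ provided by \eqref{eq:sys-fastKdVthm1}, where $\tilde y(t,x)=y(t,x)+f(x)z(t)$. In these variables the KdV subsystem has the homogeneous boundary data $\tilde y_x(t,L)=0$, the ODE is driven only by the trace $c\,\tilde y_x(t,0)$, and the feedback of the slow variable into the KdV equation appears only as a distributed perturbation of size $O(\varepsilon)$ of the form $-\varepsilon g(t) f(x)$ with $g(t):=(b-ac)z(t)+c\,\tilde y_x(t,0)$. Since the map $(\tilde y,z)\mapsto (y,z)$ is a topological isomorphism on $L^2(0,L)\times\mathbb R$, exponential stability in either coordinate system is equivalent.

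As a Lyapunov candidate I would take
\[
V(\tilde y,z):=\varepsilon W(\tilde y)+\beta z^2,
\]
with $W$ the ISS-Lyapunov functional of Proposition \ref{prop:ISS-Lyap} and $\beta>0$ to be fixed later. This functional is equivalent to $\Vert\tilde y\Vert^2_{L^2(0,L)}+|z|^2$ up to constants independent of $\varepsilon\in(0,1]$, by the same mechanism as in Lemma \ref{lem:V1-norm}. Differentiating $\varepsilon W(\tilde y)$ along the rescaled KdV equation and applying Proposition \ref{prop:ISS-Lyap} with boundary disturbance $d_2=0$ yields
\[
\frac{d}{dt}\bigl(\varepsilon W(\tilde y)\bigr)\le -\lambda\Vert\tilde y\Vert^2_{L^2(0,L)}-\kappa_3\,\tilde y_x(t,0)^2 -\varepsilon g(t)\,\bigl\langle DW(\tilde y),f\bigr\rangle,
\]
while differentiating $\beta z^2$ along the ODE gives $2\beta(b-ac)z^2+2\beta c\,z\,\tilde y_x(t,0)$.

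Next I would collect the three types of terms that appear, namely $\Vert\tilde y\Vert^2_{L^2(0,L)}$, $z^2$ and $\tilde y_x(t,0)^2$, and choose the free constants in a hierarchy. First, exploiting $b-ac<0$, pick $\beta$ small enough that the Young bound on $2\beta c\,z\,\tilde y_x(t,0)$ can be split into a piece absorbed by the negative $-\kappa_3\,\tilde y_x(t,0)^2$ coming from $W$ and a piece absorbed by the negative $2\beta(b-ac)z^2$. Second, treat the cross term $-\varepsilon g(t)\langle DW(\tilde y),f\rangle$: bounding $|\langle DW(\tilde y),f\rangle|\le C_f\Vert\tilde y\Vert_{L^2(0,L)}$ and applying Young's inequality generates $O(\varepsilon)$ perturbations to each of the three coefficients above. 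Choosing $\varepsilon^*$ sufficiently small preserves the strict negativity of all three coefficients, so that $\frac{d}{dt}V\le -\mu V$ for some $\mu>0$, from which exponential decay of $(\tilde y,z)$, and hence of $(y,z)$, follows by Grönwall.

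The main obstacle is the presence of the distributed perturbation $-\varepsilon g(t) f(x)$ in the KdV equation: unlike in Proposition \ref{prop:coupled}, where the coupling entered through the boundary and was handled directly by Proposition \ref{prop:ISS-Lyap}, here the term $\langle DW(\tilde y),f\rangle$ must be controlled using the explicit structure of the forwarding-type functional $W$ recalled in Appendix \ref{sec_ISS}, in particular to obtain a clean estimate of the form $C_f\Vert\tilde y\Vert_{L^2(0,L)}$ with a constant $C_f$ independent of $\varepsilon$. Once this bound is in hand, the smallness-of-$\varepsilon$ bookkeeping is routine and mirrors the standard Tikhonov argument in the finite-dimensional case.
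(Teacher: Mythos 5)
Your argument is correct, but it is not the route the paper takes. You both start from the same change of variables $\tilde y=y+f(\cdot)z$ and the system \eqref{eq:sys-fastKdVthm1}; the difference is the Lyapunov functional. The paper keeps the forwarding functional $V_1$ of \eqref{eq:Lyapunov-coupled}, i.e. $\varepsilon W(\tilde y)+\tfrac12\bigl(\varepsilon\int_0^L M\tilde y\,dx-z\bigr)^2$, and must then track the extra cross terms involving $K=\int_0^L Mf\,dx$ generated by the $M$-weighted average. You instead use the diagonal functional $\varepsilon W(\tilde y)+\beta z^2$. This works here precisely because, after the change of variables, the ODE reads $\dot z=(b-ac)z+c\,\tilde y_x(t,0)$ with damping $b-ac<0$, so no forwarding correction is needed to make the $z^2$ coefficient negative: the cross term $2\beta c\,z\,\tilde y_x(t,0)$ is split between $-\kappa_3\tilde y_x(t,0)^2$ and $2\beta(b-ac)z^2$ by choosing $\beta$ small, and the $O(\varepsilon)$ perturbations are absorbed by shrinking $\varepsilon^*$. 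Your version is arguably lighter on bookkeeping; the paper's choice has the advantage of reusing $V_1$ and Lemma \ref{lem:V1-norm} verbatim from Proposition \ref{prop:coupled}. Two small remarks. First, the ``main obstacle'' you identify --- estimating $\langle DW(\tilde y),f\rangle$ from the internal structure of $W$ --- is not actually an obstacle: Proposition \ref{prop:ISS-Lyap} already covers distributed disturbances through the term $\kappa_1\Vert d_1(t,\cdot)\Vert^2_{L^2(0,L)}$, and this is exactly how the paper handles $d_1=-\varepsilon\bigl((b-ac)z+c\,\tilde y_x(t,0)\bigr)f$, yielding $\kappa_1\varepsilon^2\Vert f\Vert^2_{L^2(0,L)}g(t)^2$ directly; your route via $|\langle DW(\tilde y),f\rangle|\leq C_f\Vert\tilde y\Vert_{L^2(0,L)}$ is valid but unnecessary. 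Second, the equivalence of your $V$ with $\Vert\tilde y\Vert^2_{L^2(0,L)}+|z|^2$ is not uniform in $\varepsilon$: the lower bound degenerates like $\min(\varepsilon\underline c,\beta)$, exactly as in Lemma \ref{lem:V1-norm}. This costs you an $\varepsilon$-dependent overshoot constant but not the decay rate, so the conclusion of exponential stability for each fixed $\varepsilon\in(0,\varepsilon^*)$ is unaffected.
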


\begin{proof}
 We consider the Lyapunov functional \eqref{eq:Lyapunov-coupled}. Applying Proposition \ref{prop:ISS-Lyap} with $d_1(t,x)=-  \varepsilon ((b-ac)z(t) + c\tilde y_x(t,0))f(x)$ and $d_2(t) = 0$, one obtains that all strong solutions to \eqref{eq:sys-fastKdVthm1} satisfy
\begin{multline}
\frac{d}{dt} V_1(\tilde y,z) \leq -\lambda \Vert \tilde y\Vert^2_{L^2(0,L)} + \varepsilon^2 \kappa_1 \Vert f\Vert^2_{L^2(0,L)} ((b-ac)z(t)+c\tilde y_x(t,0))^2 - \kappa_3 \tilde y_x(t,0)^2\\
\hspace{-0.5cm}+ \left(-(b-ac)z(t) + K \varepsilon ((b-ac)z(t) + c\tilde y_x(t,0))\right)\cdot \left(\varepsilon \int_0^L M(x) \tilde y(t,x) dx - z(t)\right),
\end{multline}
where $K:= \int_0^L M(x) f(x) dx$. Using Young's Lemma several times 
one obtains, that for all strong solutions to \eqref{eq:sys-fastKdVthm1}
\begin{multline}
\frac{d}{dt} V_1(\tilde y,z) \leq \left(-\lambda + \alpha_1 \Vert M\Vert^2_{L^2(0,L)} + \alpha_2 \varepsilon^2 \Vert M\Vert^2_{L^2(0,L)}\right)\Vert \tilde y\Vert^2_{L^2(0,L)} \\
\hspace{-0.5cm}+ \left((b-ac) + (b-ac)^2\left(2\varepsilon^2 \kappa_1\Vert f\Vert^2_{L^2(0,L)} + \frac{\varepsilon^2 }{\alpha_1} + 2K^2\varepsilon^2 \left(\frac{1}{\alpha_2} + \frac{1}{\alpha_3}\right)\right)+\alpha_3\right) z(t)^2\\
 + \left(2\varepsilon^2 \kappa_1 \Vert f\Vert^2_{L^2(0,L)} +K^2c^2\varepsilon^2\left(\frac{2}{\alpha_2}+\frac{2}{\alpha_3}\right)-\kappa_3\right) \tilde y_x(t,0)^2. 
\end{multline}

Selecting $\alpha_3=-\frac{-(b-ac)}{2}$, setting $\alpha_4 =\frac{1}{\alpha_2} + \frac{1}{\alpha_3}$, and choosing $\varepsilon$ and $\alpha_1$ and $\alpha_2$ satisfying
\begin{equation}
\varepsilon^2 <\min\left(\frac{\kappa_3}{2\kappa_1 \Vert f\Vert^2_{L^2(0,L)} + 2K^2c^2\alpha_4},\frac{1}{2(ac-b)\left(2\kappa_1\Vert f\Vert^2_{L^2(0,L)} + \frac{1}{\alpha_1} + 2K^2\varepsilon^2 \alpha_4\right)}\right)
\end{equation}
and
\begin{equation}
-\lambda + \alpha_1 \Vert M\Vert^2_{L^2(0,L)} + \alpha_2 \varepsilon^2 \Vert M\Vert^2_{L^2(0,L)}<0,
\end{equation}
one obtains that there exists $\mu>0$ such that
\begin{equation}
    V_1(\tilde y,z)\leq e^{-\mu t} V(\tilde y_0,z_0),\quad \forall t\geq 0.
\end{equation}
Using Lemma \ref{lem:V1-norm}, one deduces the desired result.
\end{proof}

\subsection{Tikhonov theorem}
The most relevant part of the singular perturbation method is to use the obtained subsystems in order to approximate the dynamics of the full system. This section is devoted to this more precise analysis of the asymptotic behavior of the solutions with respect to the variable $\varepsilon$. To do so, we will follow the Tikhonov strategy that has been used for instance in \cite{tang2015tikhonov,cerpa2019singular} for partial differential equations.  We introduce the error solutions
\begin{equation}
\hat{z}(t) = z(t) - \bar z(t)
\end{equation}
and 
\begin{equation}
\hat{y}(t,x) = y(t,x) + f(x)\bar z(t) - \bar y\left(\frac{t}{\varepsilon},x\right). 
\end{equation}
Using the solutions of \eqref{eq:fast_KdV}, \eqref{eq:reduced-order}, \eqref{eq:equi-point} and \eqref{eq:boundary-layer}. One can verify that
$$
\dot{\hat{z}}(t) = b z(t) + cy_x(t,0) - (b-ac) \bar z(t).
$$
Noticing that $\hat{y}_x(t,x) = y_x(t,x) + f^\prime(x) \bar z(t) - \bar y_x\left(\frac{t}{\varepsilon},x\right)$, one has $$\hat{y}_x(t,0) = y_x(t,0) - a \bar z(t) - \bar y_x\left(\frac{t}{\varepsilon},0\right)$$ and because $\dot{\hat{z}}(t) = b (\bar z(t) - z(t)) + c \bar y_x\left(\frac{t}{\varepsilon},0\right)$, we get

\begin{equation}
\label{eq:EDO-Tikhonov}
\dot{\hat{z}}(t) = b\hat{z}(t) + c \hat y_x(t,0) + c \bar y_x\left(\frac{t}{\varepsilon},0\right). 
\end{equation}
Moreover, 
\begin{equation}
\varepsilon \hat y_t(t,x) = \varepsilon y_t(t,x) + \varepsilon f(x) (b-ac) \bar z(t) - \bar y_t\left(\frac{t}{\varepsilon},x\right).
\end{equation}
Using the dynamics of $y$ (given in \eqref{eq:fast_KdV}) and the one of $\bar y$ (given in \eqref{eq:boundary-layer}), one obtains
\begin{equation}
\varepsilon \hat y_t(t,x) = -y_x - y_{xxx} + \bar y_{x}\left(\frac{t}{\varepsilon},x\right) + \bar y_{xxx}\left(\frac{t}{\varepsilon},x\right) + \varepsilon f(x) (b-ac) z(t).    
\end{equation}
Note that 
\begin{equation*}
    \hat y_x + \hat y_{xxx} = y_{x} + y_{xxx} - \bar y_{x}\left(\frac{t}{\varepsilon},x\right) - \bar y_{xxx}\left(\frac{t}{\varepsilon},x\right) + \bar z(t)(f^\prime(x) + f^{\prime\prime \prime}(x)).
\end{equation*}
Recall that $h(t,x) = -\bar z(t) f(x)$ and that $h$ solves \eqref{eq:equi-point}, i.e.  $\bar z(t)(f^\prime(x) + f^{\prime\prime \prime}(x))= 0$. Hence, one has
\begin{equation}
\varepsilon \hat y_t(t,x) = -\hat y_x - \hat y_{xxx} + \varepsilon f(x) (b-ac) \bar z(t). 
\end{equation}
Using the boundary conditions given in \eqref{eq:fast_KdV}, \eqref{eq:boundary-layer} and \eqref{eq:equi-point}, one has
$$
\hat y(t,0) = \hat y(t,L) = 0,\: \forall t\geq 0.
$$
Having in mind that $\hat y_x(t,L) = y_x(t,L) + f^\prime(L) \bar z(t) - \bar y_x\left(\frac{t}{\varepsilon},L\right)= az(t) - a \bar z(t)=a\hat z(t)$, one  can write the system

\begin{equation}
\label{eq:Tikhonov-KdV}
\left\{
\begin{array}{cl}
     & \varepsilon \hat y_t + \hat y_x + \hat y_{xxx} = \varepsilon f(x)(b-ac) \bar z(t),\quad t\in \mathbb{R}_+, x\in(0,L),  \\
     & \hat y(t,0) = \hat y(t,L) = 0, \quad t\in \mathbb{R}_+,\\
     & \hat y_x(t,L) = a \hat z(t), \quad t\in \mathbb{R}_+,\\
     & \hat y(0,x) = y_0(x)-\bar y_0(x)+f(x)\bar z(0), \quad x\in (0,L),\\
     &\dot{\hat z} = b \hat z(t) + c\hat y_x(t,0)+c\bar y_x(t/\varepsilon,0), \quad t\in \mathbb{R}_+,\\
     & \hat z(0)=z_0-\bar z_0.
\end{array}
\right.
\end{equation}

We are now in position to state our first Tikhonov theorem. 
\begin{theorem}
There exist positive constants $a_*$, $k_1$, $k_2$ and $\varepsilon^*$ such that if $a<a_*$, $b,c$ satisfy $0<k_1<-(b-ac)<k_2$ and $\varepsilon<\varepsilon^*$, then 
for any initial conditions $(y_0,z_0),(\bar y_0,\bar z_0)\in L^2(0,L)\times \mathbb R$ such that 
\begin{equation*}
\Vert y_0 - \bar y_0+ f z_0 \Vert_{L^2(0,L)} + |z_0-\bar z_0|= O(\varepsilon^{\frac{3}{2}}),\quad 
\quad |\bar z_0|=O(\varepsilon^{\frac{1}{2}}),\quad \Vert \bar y_0\Vert_{L^2(0,L)}=O(\varepsilon^{\frac{3}{2}}),
\end{equation*}
we have that the solutions of \eqref{eq:fast_KdV} satisfy for some $\mu > 0$ that
\begin{equation}
\Vert y(t,\cdot) - \bar y(t/\varepsilon,\cdot)+ f(\cdot)z(t) \Vert_{L^2(0,L)} +
|z(t)-\bar z(t)|=O(\varepsilon) e^{-\mu t}.
\end{equation}

\end{theorem}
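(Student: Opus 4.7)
I would track the evolution of the Lyapunov functional $V_1$ from \eqref{eq:Lyapunov-coupled} along the error system \eqref{eq:Tikhonov-KdV}, following the template used in Proposition \ref{prop:coupled} and Theorem \ref{thm:fast-KdV1}. Compared with \eqref{eq:fast_KdV}, the error system carries two additional forcings: a distributed source $\varepsilon f(x)(b-ac)\bar z(t)$ in the KdV part, and a boundary-trace perturbation $c\,\bar y_x(t/\varepsilon,0)$ in the ODE. The aim is to establish a differential inequality of the form
\begin{equation*}
\frac{d}{dt} V_1(\hat y,\hat z) \leq -\mu\,V_1(\hat y,\hat z) + C_1\,\varepsilon^2|\bar z(t)|^2 + C_2\,|\bar y_x(t/\varepsilon,0)|^2,
\end{equation*}
from which Grönwall together with the asymptotic hypotheses on the initial data will yield the claim.

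\textbf{Producing the inequality.} First, applying Proposition \ref{prop:ISS-Lyap} with $d_1 = \varepsilon f(b-ac)\bar z$ and $d_2 = a\hat z$ bounds $\frac{d}{dt}(\varepsilon W(\hat y))$, the factor $\varepsilon^2$ in front of $|\bar z|^2$ emerging naturally from the $\kappa_1\Vert d_1\Vert^2_{L^2(0,L)}$ term. Second, the time derivative of the cross term $\frac{1}{2}(\varepsilon\int_0^L M\hat y\,dx - \hat z)^2$ is computed by the same integrations by parts as in Proposition \ref{prop:coupled}, using the defining identities $M'''+M'=0$, $M(0)=M(L)=0$, $M'(0)=-c$ and $M'(L)=c$ from \eqref{eq:bdv-M}. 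This produces the effective coefficient $-(b-ac)$ in front of $\hat z^2$, together with two new terms $\varepsilon K(b-ac)\bar z$ (where $K=\int_0^L M f\,dx$) and $-c\bar y_x(t/\varepsilon,0)$. Several applications of Young's lemma, combined with the stability conditions $a<a_*$, $0<k_1<-(b-ac)<k_2$ and $\varepsilon$ small, absorb all quadratic terms in $\hat y$ and $\hat z$ into the leading dissipation, delivering the displayed inequality.

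\textbf{Exploiting decay of the limit subsystems.} Grönwall yields
\begin{equation*}
V_1(\hat y,\hat z)(t) \leq e^{-\mu t} V_1(\hat y_0,\hat z_0) + \int_0^t e^{-\mu(t-s)}\bigl(C_1\varepsilon^2|\bar z(s)|^2 + C_2|\bar y_x(s/\varepsilon,0)|^2\bigr)\,ds.
\end{equation*}
Since $\bar z(s) = e^{(b-ac)s}\bar z_0$ decays at rate $k_1 > 0$, the first convolution is bounded by $C\,\varepsilon^2 |\bar z_0|^2\,e^{-\mu_1 t}$ for some $\mu_1 > 0$. For the second, the exponential stability of \eqref{eq:boundary-layer} (valid because $L\notin\mathcal N$) together with the trace term $-\kappa_3 y_x(\cdot,0)^2$ in Proposition \ref{prop:ISS-Lyap} yields, via the shifted profile $e^{\gamma\tau}\bar y(\tau,\cdot)$ for $\gamma>0$ small enough, a weighted trace estimate $\int_0^\infty e^{2\gamma\tau}|\bar y_x(\tau,0)|^2\,d\tau \leq C\Vert\bar y_0\Vert^2_{L^2(0,L)}$. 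The change of variable $\tau=s/\varepsilon$, valid as soon as $\mu\varepsilon<2\gamma$, turns the second convolution into $C\,\varepsilon\,\Vert\bar y_0\Vert^2_{L^2(0,L)}\,e^{-\mu t}$. Invoking Lemma \ref{lem:V1-norm}, which gives $\Vert\hat y(t)\Vert^2_{L^2(0,L)}+|\hat z(t)|^2 \leq \varepsilon^{-1} V_1(\hat y,\hat z)(t)$ and $V_1(\hat y_0,\hat z_0) \leq C(\Vert\hat y_0\Vert^2_{L^2(0,L)}+|\hat z_0|^2)$, and substituting $\Vert\hat y_0\Vert^2_{L^2(0,L)}+|\hat z_0|^2 = O(\varepsilon^3)$, $|\bar z_0|^2 = O(\varepsilon)$ and $\Vert\bar y_0\Vert^2_{L^2(0,L)} = O(\varepsilon^3)$ produces $\Vert\hat y(t)\Vert^2_{L^2(0,L)}+|\hat z(t)|^2 = O(\varepsilon^2)\,e^{-\mu' t}$. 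The statement then follows from the triangle inequality $\Vert\hat y+f\hat z\Vert_{L^2(0,L)}\leq \Vert\hat y\Vert_{L^2(0,L)} + \Vert f\Vert_{L^2(0,L)}|\hat z|$.

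\textbf{Main obstacle.} The most delicate step is the control of the boundary trace $\bar y_x(t/\varepsilon,0)$: it is not pointwise defined for $L^2$ data, so one must lean on Kato-type smoothing encoded in Proposition \ref{prop:ISS-Lyap}; moreover the time rescaling $\tau=t/\varepsilon$ inflates any unweighted $L^2$-in-time bound by $e^{\mu\varepsilon\tau}$, so an exponentially weighted trace estimate with a rate $\gamma$ independent of $\varepsilon$ is required. This scaling is precisely what calibrates the orders of the initial data: $|\bar z_0|=O(\varepsilon^{1/2})$ compensates the $\varepsilon^{-1}$ factor coming from Lemma \ref{lem:V1-norm}, and $\Vert\bar y_0\Vert_{L^2(0,L)}=O(\varepsilon^{3/2})$ absorbs the $\varepsilon$-prefactor from the change of variable while still producing the advertised $O(\varepsilon)$ rate.
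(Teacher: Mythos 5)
Your proposal follows essentially the same route as the paper's proof: the same functional $V_1$ evaluated along the error system \eqref{eq:Tikhonov-KdV}, the same application of Proposition \ref{prop:ISS-Lyap} with $d_1=\varepsilon f(b-ac)\bar z$ and $d_2=a\hat z$, the same Young/polynomial argument yielding the conditions on $a$ and $b-ac$, Gr\"onwall, the exponentially weighted trace estimate for $\bar y_x(\cdot,0)$ obtained from the $-\kappa_3|y_x(t,0)|^2$ term, and the final order count through the $\varepsilon$-dependent lower bound of Lemma \ref{lem:V1-norm}. It is correct; in fact you are slightly more careful than the paper on the Jacobian factor $\varepsilon$ in the change of variables $\tau=s/\varepsilon$ and on the final triangle inequality relating $\hat y+f\hat z$ to the quantity in the statement.
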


\begin{proof}
Let us consider the Lyapunov functional \eqref{eq:Lyapunov-coupled}. Its derivative along strong solutions to \eqref{eq:Tikhonov-KdV} yields
\begin{multline*}
\frac{d}{dt} V_1(\hat y,\hat z) \leq - \lambda \Vert \hat y\Vert^2_{L^2(0,L)} + \kappa_1 \varepsilon^2 \Vert f\Vert^2_{L^2(0,L)}(b-ac)^2 \bar z(t)^2 + \kappa_2 a^2 \hat z(t)^2\\
+ \left(K\varepsilon(b-ac)\bar z(t) - (b-ac) \hat z(t) + c\bar y_x(t,0)\right)\left(\varepsilon\int_0^L M(x) \hat y(t,x) dx - \hat z(t)\right),
\end{multline*}
where $K:=\int_0^L f(x) M(x) dx$. Using Young's Lemma several times,
one obtains for all strong solutions of \eqref{eq:Tikhonov-KdV} that
\begin{multline*}
\frac{d}{dt} V_1(\hat y,\hat z)\leq  (-\lambda + (\alpha_1 \varepsilon^2 + \alpha_3 + \alpha_4)\Vert M\Vert^2_{L^2(0,L)}) \Vert \hat y\Vert^2_{L^2(0,L)}\\
+ \left(\kappa_2 a^2 + (b-ac) + \alpha_2 + \alpha_5 + (b-ac)^2 \frac{\varepsilon^2}{\alpha_3}\right) \hat z(t)^2\\
 + K^2\varepsilon^2 (b-ac)^2 \left(\frac{1}{\alpha_1} + \frac{1}{\alpha_2}\right) \bar z(t)^2
 + c^2 \left(\frac{\varepsilon^2}{\alpha_4} + \frac{1}{\alpha_5}\right) \bar y_x(t,0)^2.
\end{multline*}
One selects $\alpha_1,\alpha_3$ and $\alpha_4$ such that
$$
-\lambda + (\alpha_1 \varepsilon^2 + \alpha_3 + \alpha_4)\Vert M\Vert^2_{L^2(0,L)} <0
$$
and $\alpha_2$ and $\alpha_5$ such that
$$
\alpha_2 + \alpha_5 = -\frac{b-ac}{2}.
$$
Then, setting $\mu_1:=\lambda - (\alpha_1 \varepsilon^2 + \alpha_3 + \alpha_4)\Vert M\Vert^2_{L^2(0,L)}< \lambda $, one has
\begin{multline*}
\frac{d}{dt} V(\hat y,\hat z)\leq  -\mu_1 \Vert \hat y\Vert^2_{L^2(0,L)}
+ \left(\kappa_2 a^2 + \frac{(b-ac)}{2} + (b-ac)^2 \frac{\varepsilon^2}{\alpha_3}\right) \hat z(t)^2\\
 + K^2\varepsilon^2 (b-ac)^2 \left(\frac{1}{\alpha_1} + \frac{1}{\alpha_2}\right) \bar z(t)^2
 + c^2 \left(\frac{\varepsilon^2}{\alpha_4} + \frac{1}{\alpha_5}\right) \bar y_x(t,0)^2.
\end{multline*}
Let us now consider the polynomial $P(X) = \kappa_2a^2 - \frac{1}{2} X + \frac{\varepsilon^2}{\alpha_3}X^2$. If $a$ is such that $a^2<\frac{\alpha_3}{16\varepsilon^2 \kappa_2}$, $P(X)$ admits two square roots
\begin{equation}
X_1 = \alpha_3\frac{1 - \sqrt{1-\frac{16\varepsilon^2 \kappa_2 a^2}{\alpha_3}}}{4\varepsilon^2},\: X_2 =  \alpha_3\frac{1 + \sqrt{1-\frac{16\varepsilon^2 \kappa_2 a^2}{\alpha_3}}}{4\varepsilon^2}.
\end{equation}
Replacing $X$ by $ac-b$, one has $P(ac-b)<0$ if 
$$
X_1<ac-b<X_2.
$$
Then, there exists $\mu_2>0$ such that, for all strong solutions to \eqref{eq:Tikhonov-KdV}
\begin{multline}
\label{theo:Tikhonov-before-integral}
\frac{d}{dt} V(\hat y,\hat z)\leq  -\mu_1 \Vert \hat y\Vert^2_{L^2(0,L)} - \mu_2 \hat z(t)^2
+ K^2\varepsilon^2 (b-ac)^2 \left(\frac{1}{\alpha_1} + \frac{1}{\alpha_2}\right) \bar z(t)^2\\
 + c^2 \left(\frac{\varepsilon^2}{\alpha_4} + \frac{1}{\alpha_5}\right) \bar y_x\left(\frac{t}{\varepsilon},0\right)^2.
\end{multline}
Using Proposition \ref{prop:ISS-Lyap}, the Gr\"onwall's Lemma and setting $\mu_3:=\min\left(\frac{\mu_1}{\overline{\nu_1}},\frac{\mu_2}{\overline{\nu_1}}\right)$, one obtains for all $t\geq 0$ that
\begin{equation}
\label{eq:inequalityV_1-Tikhonov}
V_1(\hat y,\hat z) \leq e^{-\mu_3 t} V(\hat y_0,\hat z_0) + O(\varepsilon^2) \int_0^t e^{-\mu_3 (t-s)} |\bar z(s)|^2 ds + O(1) \int_0^t e^{-\mu_3(t-s)} \bar y_x^2\left(\frac{t}{\varepsilon},0\right)
\end{equation}
Let us estimate the integrals appearing in \eqref{eq:inequalityV_1-Tikhonov}. Using \eqref{eq:reduced-order}, one has
\begin{equation}
\label{eq:inequality-barz}
\bar z(t)^2\leq e^{-(b-ac) t} \bar z_0^2. 
\end{equation}
Moreover, using the Lyapunov functional given in Proposition \ref{prop:ISS-Lyap}, one obtains for all strong solutions to \eqref{eq:boundary-layer} that
\begin{equation*}
\frac{d}{d\tau} W(\bar y) \leq - \frac{\lambda}{\overline c} W(\bar y) - \kappa_3 \bar y_x(\tau,0),
\end{equation*}
with $\tau=\frac{t}{\varepsilon}$. Notice that
\begin{equation}
   \mu_3 \leq \frac{\mu_1}{\overline \nu_1} \leq \frac{\mu_1}{\varepsilon \overline c + \varepsilon^2 \Vert M\Vert^2_{L^2(0,L)}}\leq \frac{\mu_1}{\varepsilon\overline c} \leq \frac{\lambda}{\varepsilon \bar c}
\end{equation}
where we have used the definition of $\nu_1$ given in Lemma \ref{lem:V1-norm} and the definition of $\mu_1$ and $\mu_3$. Then, one has for all strong solutions to \eqref{eq:boundary-layer} that
\begin{equation*}
\frac{d}{d\tau} W(\bar y) \leq - \varepsilon \mu_3 W(\bar y) - \kappa_3 \bar y_x(\tau,0).
\end{equation*}

Hence, using the Gr\"onwall's Lemma again one obtains for all $\tau\geq 0$ that
\begin{equation*}
W(\bar y) \leq e^{-\varepsilon\mu_3 \tau} W(\bar y_0) - \int_0^\tau e^{-\varepsilon \mu_3 (\tau-s)} \bar y_x(\tau,0)^2 d\tau.
\end{equation*}
One can conclude that
\begin{equation}
\label{eq:hidden-regularity1}
\int_0^{\tau} e^{-\varepsilon \mu_3 (\tau-s)} y_x(s,0)^2 ds \leq \bar c e^{-\varepsilon\mu_3 \tau} \Vert \bar y_0\Vert^2_{L^2(0,L)}.
\end{equation}
Setting $\bar s = \varepsilon s$ and using the definition of $\tau$, one obtains
\begin{equation}
\label{eq:hidden-regularity2}
\int_0^{\frac{t}{\varepsilon}} e^{-\mu_3 (t-\bar s)} \bar y_x\left(\frac{\bar s}{\varepsilon},0\right)^2 ds \leq \bar c e^{-\mu_3 t} \Vert \bar y_0\Vert^2_{L^2(0,L)}.
\end{equation}
Then, using \eqref{eq:inequality-barz} and \eqref{eq:hidden-regularity2}, and noticing that, since $\varepsilon\leq 1$, one has $\frac{t}{\varepsilon}\geq t$, one obtains finally that for all $t\geq 0$
\begin{equation}
V_1(\hat y,\hat z) \leq O(1) e^{-\mu_3 t} V_1(\hat y_0,\hat z_0) + O(\varepsilon^2) e^{-(b-ac) t} |\bar z_0|^2 + O(1) e^{-\mu_3 t} \Vert \bar y_0\Vert^2_{L^2(0,L)}.
\end{equation}
Taking $\mu_4=\min((b-ac),\mu_3)$, and using the smallness condition on the initial conditions, one obtains that
\begin{equation}
V_1(\hat y,\hat z) \leq e^{-\mu_4 t} O(\varepsilon^3).
\end{equation}
Using Lemma \ref{lem:V1-norm}, one has $V_1(\hat y,\hat z)> O(\varepsilon)(\Vert \hat y\Vert_{L^2(0,L)} + |\hat z|)^2$, concluding thus the proof.
\end{proof}

\section{Fast ODE coupled with a slow KdV equation}
\label{sec:fast_ODE}

\subsection{Stability for small $\varepsilon$}

Following the steps in the singular perturbation method, we are going to compute the reduced order system and the boundary layer system for \eqref{eq:fast_ODE}. The exponential stability conditions will be drastically different, which explains why we used a different Lyapunov functional. In addition to this different Lyapunov functional, these conditions will hold at the price of considering strong solutions to \eqref{eq:fast_ODE}.

\paragraph{Reduced order system.} Setting $\varepsilon=0$, one obtains that $z(t) = -\frac{c}{b}y_x(t,0)$. The reduced order system, whose state is denoted by $\bar y$, satisfies

\begin{equation}
\label{eq:reduced_order1}
\left\{
    \begin{array}{cl}
        &\bar y_t + \bar y_x + \bar y_{xxx}=0,\: (t,x)\in\mathbb R_+\times [0,L],\\
        &\bar y(t,0) = \bar y(t,L) = 0,\: t\in\mathbb R_+\\
        &\bar y_x(t,L) = -\frac{ac}{b} \bar y_x(t,0),\: t\in \mathbb R_+\\
        &\bar y(0,x) = \bar y_0(x),\: x\in [0,L].
    \end{array}
    \right.
\end{equation}

Using the Lyapunov functional given in Proposition \ref{prop:ISS-Lyap} with $d_2(t) = -\frac{ac}{b}y_x(t,0)$, one has

\begin{equation}
    \frac{d}{dt} W(\bar y) \leq -\lambda W + \left(\kappa_2\frac{a^2c^2}{b^2}-\kappa_3\right) y_x(t,0)^2.
\end{equation}
Hence, if $\frac{a^2c^2}{b^2}<\frac{\kappa_3}{\kappa_2}$, then the origin of \eqref{eq:reduced_order1} is ensured to be exponentially stable. Note that this conditions looks like the one given in \cite{zhang1994boundary}. However, in this latter paper, one requires that $\left|-\frac{ac}{b}\right|<1$. This is surely associated to the fact that the Lyapunov approach is more conservative than the one followed in \cite{zhang1994boundary}. 

\paragraph{Boundary layer system.} Consider $\tau = \frac{t}{\varepsilon}$ and $\bar z(\tau) = z(\tau) + \frac{c}{b} y_x(t,0)$. One has $\dot{\bar z}(\tau) = \dot z(\tau) + \varepsilon \frac{d}{dt} \frac{c}{b} y_{xt}(t,0)$. With $\varepsilon=0$, one obtains that $$\dot{\bar z}(\tau) = \dot z(\tau) = b z(\tau) + c y_x(\tau,0)= b(z(\tau) + \frac{c}{b} y_x(\tau,0)) = b \bar z(\tau).$$ Then, the boundary layer system is defined by

\begin{equation}
    \label{eq:boundary-layer1}
    \dot{\bar z}(\tau) = b \bar z(\tau),
\end{equation}
which means that its origin is exponentially stable if $b<0$.

\paragraph{Full system.} 

Consider $\tilde z(t) = z(t) + \frac{c}{b} y_x(t,0)$. Then, $y$ solves the following equation
\begin{equation}
\label{eq:reduced_order1}
\left\{
    \begin{array}{cl}
        & y_t + y_x + y_{xxx}=0,\: (t,x)\in\mathbb R_+\times [0,L],\\
        & y(t,0) = y(t,L) = 0,\: t\in\mathbb R_+,\\
        & y_x(t,L) = a\tilde z(t) -\frac{ac}{b} y_x(t,0),\: t\in\mathbb R_+\\
        & y(0,x) = y_0(x),\: x\in [0,L].
    \end{array}
    \right.
\end{equation}
One has $\varepsilon \dot{\tilde z} = \varepsilon \dot z + \varepsilon\frac{c}{b} y_{tx}(t,0) = b z(t) + \frac{c}{b} y_{x}(t,0) + \varepsilon \frac{c}{b} y_{tx}(t,0)$, i.e.,
\begin{equation}
\label{eq:fast_ODE-difference}
\varepsilon \dot{\tilde z}(t) = b \tilde z(t) + \varepsilon \frac{c}{b}y_{tx}(t,0).
\end{equation}

Since the dynamics of $\tilde z$ introduces the time-derivative of $y_x(t,0)$, one needs more regularity on $y$. We consider therefore $v=\varepsilon y_t$. The use of the parameter $\varepsilon$ in the variable $v$ is due to the fact that, in \eqref{eq:reduced_order1} it appears the state $\tilde z$. The dynamics of $v$ is given by
\begin{equation}
\label{eq:reduced_order_regular}
\left\{
    \begin{array}{cl}
        & v_t + v_x + v_{xxx}=0,\: (t,x)\in\mathbb R_+\times [0,L],\\
        & v(t,0) = v(t,L) = 0, \: t\in \mathbb R_+,\\
        & v_x(t,L) = ab \tilde z(t),\: t\in\mathbb R_+\\
        & v(0,x) = -\varepsilon y^\prime_0-\varepsilon y^{\prime\prime\prime}_0,\: x\in [0,L],\\
        &\varepsilon \dot {\tilde z}(t) = b \tilde z(t) + \varepsilon\frac{c}{b} v_x(t,0),\: t\in \mathbb R_+\\
        &\tilde z(0) = z_0+\frac{c}{b}y^\prime_0.
    \end{array}
    \right.
\end{equation}

Well-posedness of \eqref{eq:reduced_order_regular} is given by our results in Section \ref{sec:well-posed}.  Because we work in $L^2$-regularity for $v$ and $H^3$-regularity for $y$, some compatibility conditions appear on the intitial data. We are in position to state the following result.
\begin{theorem}
\label{th:singular2}
For any $a,b,c\in\mathbb R$ such that $\frac{a^2c^2}{b^2}<\frac{\kappa_3}{\kappa_2}$, where $\kappa_2$ and $\kappa_3$ are defined in Proposition \ref{prop:ISS-Lyap}, there exists $\varepsilon^*>0$ such that for any $\varepsilon\in (0,\varepsilon^*)$, the origin of \eqref{eq:fast_ODE} is exponentially stable for any initial conditions $(y_0,z_0)\in H^3(0,L)\times \mathbb R$ such that $$y_0(0)=y_0(L) = 0,\quad y_0^\prime(L) = ab(z_0+\frac c b y_0^\prime(0)).$$ 
\end{theorem}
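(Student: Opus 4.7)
The plan is to work in the augmented variables $(y,v,\tilde z)$, where $v=\varepsilon y_t$ and $\tilde z=z+\tfrac{c}{b}y_x(t,0)$: the equation for $y$ is the second \eqref{eq:reduced_order1} in the excerpt, while the $v$- and $\tilde z$-equations are given by \eqref{eq:reduced_order_regular}. The role of $v$ is to give a meaningful $L^2$ trace $v_x(t,0)=\varepsilon y_{tx}(t,0)$ in the ODE for $\tilde z$, which is precisely what forces the $H^3$ hypothesis and the compatibility conditions in the statement. The structural point is that the boundary condition of $y$ at $x=L$ splits as $a\tilde z-\tfrac{ac}{b}y_x(t,0)$: the destabilizing feedback piece $-\tfrac{ac}{b}y_x(t,0)$ must be absorbed into the dissipative term $-\kappa_3 y_x(t,0)^2$ produced by Proposition \ref{prop:ISS-Lyap}, and this absorption is exactly what the hypothesis $\tfrac{a^2c^2}{b^2}<\tfrac{\kappa_3}{\kappa_2}$ quantifies. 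Note that, as in Proposition \ref{prop:coupled2} and in the boundary-layer analysis above, the proof implicitly requires $b<0$.

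I would propose the composite Lyapunov functional
\begin{equation*}
\mathcal{V}(y,v,\tilde z) := W(y) + A\, W(v) + B\, \varepsilon\, \tilde z^2,
\end{equation*}
with $A,B>0$ to be tuned, and differentiate along strong solutions. Applying Proposition \ref{prop:ISS-Lyap} to $W(y)$ with disturbance $d_2=a\tilde z-\tfrac{ac}{b}y_x(t,0)$ and using Young's inequality on the square yields
\begin{equation*}
\tfrac{d}{dt}W(y) \leq -\lambda\|y\|_{L^2(0,L)}^2 + \kappa_2 a^2(1+\alpha)\tilde z^2 - \left[\kappa_3 - \kappa_2\left(1+\tfrac{1}{\alpha}\right)\tfrac{a^2c^2}{b^2}\right] y_x(t,0)^2,
\end{equation*}
and the hypothesis lets one pick $\alpha>0$ large enough that the bracketed coefficient is strictly positive. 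Applied to $W(v)$, which solves a KdV equation with $d_2=ab\tilde z$, the proposition gives $-\lambda\|v\|_{L^2(0,L)}^2+\kappa_2 a^2b^2\tilde z^2-\kappa_3 v_x(t,0)^2$. Finally, differentiating $B\varepsilon\tilde z^2$ directly one finds $2Bb\tilde z^2+2B\varepsilon\tfrac{c}{b}v_x(t,0)\tilde z$; the cross term is split by Young's inequality into contributions of order $O(\varepsilon)$ in both $\tilde z^2$ and $v_x(t,0)^2$, and since $b<0$ the term $2Bb\tilde z^2$ is strictly dissipative.

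Collecting terms, all the positive $\tilde z^2$ contributions are $\varepsilon$-uniformly bounded, so $B$ can be chosen large enough that $2B|b|$ dominates them; the dissipative $-A\kappa_3 v_x(t,0)^2$ then absorbs the residual $O(\varepsilon) v_x(t,0)^2$ as soon as $\varepsilon$ is taken small. This produces $\tfrac{d}{dt}\mathcal{V}\leq-\mu\mathcal{V}$ for some $\mu>0$ and every $\varepsilon\in(0,\varepsilon^*)$, hence exponential decay of $\|y\|_{L^2}^2+\|v\|_{L^2}^2+\varepsilon\tilde z^2$. To pass back to the physical state, I would use $z=\tilde z-\tfrac{c}{b}y_x(t,0)$ together with the PDE identity $y_{xxx}=-y_x-y_t$ and the homogeneous Dirichlet conditions to obtain $\|y\|_{H^3(0,L)}\leq C(\|y\|_{L^2(0,L)}+\|y_t\|_{L^2(0,L)})=C(\|y\|_{L^2(0,L)}+\tfrac{1}{\varepsilon}\|v\|_{L^2(0,L)})$, and then the trace theorem to control $|y_x(t,0)|$; the $H^3$ assumption on the initial data keeps the right-hand side finite at $t=0$.

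I expect the main obstacle to be the simultaneous sign bookkeeping for the coefficients of $\|y\|^2$, $\|v\|^2$, $\tilde z^2$, $y_x(t,0)^2$ and $v_x(t,0)^2$ once the three Young's parameters and the constants $A$, $B$, $\varepsilon$ are juggled together. The reason the argument works is structural: the only $\varepsilon$-independent negative contribution in $\tilde z^2$, namely $2Bb$, can be made as large as needed by choosing $B$, while every positive contribution is either finite (in $\tilde z^2$) or $O(\varepsilon)$ (in $v_x(t,0)^2$); keeping the calculation readable is the delicate part.
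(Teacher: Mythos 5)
Your proof is correct and its core mechanism is the same as the paper's: the change of variables $v=\varepsilon y_t$, $\tilde z = z+\tfrac{c}{b}y_x(t,0)$, the application of Proposition \ref{prop:ISS-Lyap} to the $v$-equation with $d_2=ab\tilde z$, and the treatment of the cross term $\varepsilon\tfrac{c}{b}v_x(t,0)\tilde z$ by Young's inequality so that the residual $O(\varepsilon)v_x(t,0)^2$ is absorbed by $-\kappa_3 v_x(t,0)^2$ for $\varepsilon$ small. The paper uses exactly the functional $V_3(v,\tilde z)=W(v)-\varepsilon\kappa_2 a^2 b\,\tilde z^2$, i.e.\ your $\mathcal V$ with $A=1$, $B=-\kappa_2 a^2 b$ and \emph{without} the $W(y)$ term; it establishes decay of $\Vert v\Vert^2_{L^2(0,L)}+|\tilde z|^2$ and then simply asserts the conclusion. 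Your version is genuinely more complete on two counts: (i) including $W(y)$ with the disturbance $d_2=a\tilde z-\tfrac{ac}{b}y_x(t,0)$ gives direct decay of $\Vert y\Vert_{L^2(0,L)}$ (this is where the hypothesis $\tfrac{a^2c^2}{b^2}<\tfrac{\kappa_3}{\kappa_2}$ is used in the same way as for the reduced order system \eqref{eq:reduced_order1}), and (ii) the elliptic estimate $\Vert y\Vert_{H^3(0,L)}\leq C(\Vert y\Vert_{L^2(0,L)}+\varepsilon^{-1}\Vert v\Vert_{L^2(0,L)})$ from $y_{xxx}+y_x=-y_t$ plus the trace bound on $y_x(t,0)$ make the passage back from $(v,\tilde z)$ to the physical state $(y,z)$ explicit, a step the paper leaves implicit. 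You are also right that $b<0$ must be assumed even though it is missing from the statement: the paper's own proof invokes it (both for the positivity of $-\varepsilon\kappa_2a^2b$ and for the dissipativity of the $\tilde z$-dynamics), exactly as yours does.
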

\begin{proof}
To prove this result, we consider the following Lyapunov functional
\begin{equation}
\label{eq:V_3}
V_3(v,\tilde z) = W(v) - \varepsilon \kappa_2 a^2 b   \tilde z^2,
\end{equation}
where $W$ is the Lyapunov functional given in Proposition \ref{prop:ISS-Lyap}.
Using the same proof as in Lemma \ref{lem:Lyapunov-fastODE-equi}, one has that
for any $b<0$, we can define $\overline{\nu}_3=\max\left(\overline{c}, -\varepsilon\kappa_2^2a^2 b\right)$ and $\underline{\nu}_3=\min\left(\underline c,-\varepsilon\kappa_2 a^2 b\right)$, where $\kappa_2>0$ comes from Proposition \ref{prop:ISS-Lyap} such that the Lyapunov functional \eqref{eq:V_3} satisfies
\begin{equation}\label{ex-lema}
  \underline{\nu}_3 (\Vert v\Vert_{L^2(0,L)} + |\tilde z|^2) \leq  V_3(v,\tilde z) \leq \overline{\nu}_3(\Vert v\Vert_{L^2(0,L)} + |\tilde z|^2).
\end{equation}

 Time derivative of \eqref{eq:V_3} along the strong solutions to \eqref{eq:reduced_order_regular} yields
\begin{equation}
\begin{split}
\frac{d}{dt}V_3(v,z) \leq & - \lambda \Vert v\Vert^2_{L^2(0,L)} + \kappa_2 a^2b^2 \tilde z^2 - \kappa_3 v_x(0)^2 - 2\kappa_2 a^2 b^2 \tilde z^2 + 2\kappa_2 a^2\left(b\tilde z \varepsilon \frac{c}{b}v_x(t,0)\right).
\end{split}
\end{equation}
Using Young's Lemma, one gets
\begin{equation}
\frac{d}{dt}V_3(v,z) \leq - \lambda \Vert v\Vert^2_{L^2(0,L)} - \kappa_2 a^2 b^2 \tilde z^2 - \kappa_3 v_x(0)^2 + 2\alpha \kappa_2 a^2 b^2 \tilde z^2 + \frac{2}{\varepsilon \alpha} \frac{a^2c^2\kappa_2}{b^2} v_x(t,0)^2
\end{equation}
and setting $\alpha = \frac{1}{4}$ one obtains
\begin{equation}
\frac{d}{dt}V_3(v,z) \leq - \lambda \Vert v\Vert^2_{L^2(0,L)} - \kappa_2 \frac{a^2 b^2}{2} \tilde z^2 + \left(8\varepsilon^2\kappa_2 \frac{a^2c^2}{b^2}-\kappa_3\right) v_x(0)^2.
\end{equation}
If one takes $\varepsilon<\frac{1}{8}$, $b<0$ and $\frac{a^2c^2}{b^2}<\frac{\kappa_3}{\kappa_2}$, then one obtains the desired result using \eqref{ex-lema}. \end{proof}

\subsection{Tikhonov theorem}

This subsection is devoted to the asymptotic analysis of \eqref{eq:fast_ODE} with respect to $\varepsilon$. As before, such an analysis requires to consider strong solutions to \eqref{eq:fast_ODE}.  Let us introduce the two variables

\begin{equation}
\hat z(t) = z(t) + \frac{c}{b} y_x(t,0) - \bar z\left(\frac{t}{\varepsilon}\right),\: \hat y(t) = y(t) - \bar y(t).
\end{equation}
One can check that
\begin{equation}
\label{eq:EDO-Tikhonov2}
\varepsilon \dot{\hat z}(t) = b\hat z(t) + \frac{\varepsilon c}{b}\left(\hat y_{tx}(t,0) + \bar y_{tx}(t,0)\right)
\end{equation}
and
\begin{equation}
\left\{
\begin{array}{cl}
     & \hat y_t + \hat y_x + \hat y_{xxx}=0, \: (t,x)\in \mathbb R_+\times [0,L], \\
     & \hat y(t,0) = \hat y(t,L) = 0,\: t\in \mathbb R_+,\\
     & \hat y_x(t,L) = a\left(\hat z(t) + \bar z\left(\frac{t}{\varepsilon}\right)\right),\: t\in\mathbb R_+,\\
     & \hat y(0,x) = \hat y_0(x),\: x\in [0,L].
\end{array}
\right.
\end{equation}
Since \eqref{eq:EDO-Tikhonov2} introduces the time-derivative of $y_x(t,0)$, let us consider $\hat v = \varepsilon\hat y_t$, where $\varepsilon$ is introduced because the boundary condition makes appear $z$ and $\bar z$. Its dynamics satisfies the following system
\begin{equation}
\label{eq:EDP-Tikhonov2}
\left\{
\begin{array}{cl}
     & \hat v_t + \hat v_x + \hat v_{xxx}=0,\: (t,x)\in\mathbb R_+\times [0,L],\\
     & \hat v(t,0) = \hat v(t,L) = 0,\: t\in \mathbb R_+,\\
     & \hat v_x(t,L) = ab \hat z(t) + \varepsilon\frac{ac}{b}\left(v_x(t,0) + \bar y_{tx}(t,0))\right) + ab \bar z\left(\frac{t}{\varepsilon}\right),\: t\in\mathbb R_+,\\
     & \hat v(0,x) = \hat v_0(x),\: x\in [0,L].
\end{array}
\right.
\end{equation}
Let us consider the Lyapunov functional
\begin{equation}
\label{eq:V_4}
    V_4(\hat v,\hat z) = W(\hat v) - 3\varepsilon \kappa_2  a^2 b |\hat z|^2.
\end{equation}
We can prove, as before, the following. For any $b<0$, defining $\overline{\nu}_4:=\max(\overline c,-3\varepsilon\kappa_2 a^2b)$ and $\underline{\nu}_4:=\min(\underline c,-3\varepsilon \kappa_2 a^2b)$, where $\kappa_2$ comes from Proposition \ref{prop:ISS-Lyap}, the Lyapunov functional \eqref{eq:V_4} satisfies
\begin{equation}\label{lem:V_4}
    \underline{\nu}_4(\Vert \hat v\Vert^2_{L^2(0,L)} + |\hat z\Vert|^2)\leq V_4(\hat v,\hat z)\leq \overline{\nu}_4(\Vert \hat v\Vert^2_{L^2(0,L)} + |\hat z|^2).
\end{equation}
Moreover, if $\varepsilon\leq 1$, then there existe $C>0$ such that
\begin{equation}
   \varepsilon (\Vert \hat v\Vert^2_{L^2(0,L)} + |\hat z|^2)\leq  V_4(\hat v,\hat z) \leq C(\Vert \hat v\Vert^2_{L^2(0,L)} + |\hat z|^2).
\end{equation}
We are now in position to state and prove our Tikhonov theorem for \eqref{eq:fast_ODE}.
\begin{theorem}
There exist $\varepsilon^*$ and $\mu>0$ such that for any $\varepsilon\in (0,\varepsilon^*)$, for any $b<0$, for any $a,c\in\mathbb R$ such that $\frac{a^2c^2}{b^2}<\frac{\kappa_3}{44\kappa_2 \varepsilon^2}$, where $\kappa_2$ and $\kappa_3$ come from Proposition \ref{prop:ISS-Lyap}, and for any initial conditions $(y_0,z_0)\in H^3(0,L)\times\mathbb R$ satisfying the compatibility conditions $$y_0(0) = y_0(L) = 0,\quad y^\prime(L) = az_0$$ and the smallness conditions
\begin{equation}
\begin{split}
&\Vert y_0-\bar y_0\Vert_{H^3(0,L)} + \left|z_0 + \frac{c}{b} y^\prime_0(0) - \bar{z}_0\right|= O(\varepsilon^{5/2}) \\
&|\bar z_0|=O(\varepsilon^{5/2}),\quad \Vert \bar{y}_0\Vert_{H^3(0,L)} = O(\varepsilon^{3/2}),
\end{split}
\end{equation}
then for all $t\geq 0$
\begin{equation}
\Vert y(t,\cdot)-\bar y(t,\cdot)\Vert_{H^3(0,L)} + \left|z(t) + \frac{c}{b} y_x(t,0) - \bar{z}(t/\varepsilon)\right|=O(\varepsilon)e^{-\mu t}. 
\end{equation}
\end{theorem}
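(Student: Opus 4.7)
The plan is to mirror the strategy of the first Tikhonov theorem, using $V_4$ in place of $V_1$. I would first differentiate $V_4(\hat v,\hat z) = W(\hat v) - 3\varepsilon\kappa_2 a^2 b|\hat z|^2$ along the strong solutions of \eqref{eq:EDP-Tikhonov2}--\eqref{eq:EDO-Tikhonov2}. Applying Proposition \ref{prop:ISS-Lyap} to the $\hat v$-equation with the boundary disturbance
$$d_2(t) = ab\hat z(t) + \varepsilon\tfrac{ac}{b}\bigl(\hat v_x(t,0) + \bar y_{tx}(t,0)\bigr) + ab\bar z(t/\varepsilon),$$
and using the $\hat z$-equation to compute the time derivative of $-3\varepsilon\kappa_2 a^2 b|\hat z|^2$, I obtain a bound involving $-\lambda\|\hat v\|^2_{L^2}$, $-\kappa_3|\hat v_x(t,0)|^2$, cross terms in $\hat z$, $\bar z(t/\varepsilon)$, $\bar y_{tx}(t,0)$ and $\hat v_x(t,0)$, plus a favorable $-c_0|\hat z|^2$ coming (as in the proof of Theorem \ref{th:singular2}) from the combination of $-3\varepsilon\kappa_2 a^2 b|\hat z|^2$'s time derivative with the $\kappa_2 d_2^2$ term.

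Next, I would expand the square of $d_2$ and distribute every cross term via Young's inequality with well-chosen parameters $\alpha_i$, so that: (i) the $\hat v_x(t,0)^2$ coefficient is dominated by $\kappa_3$ when $\varepsilon$ is small and $a^2c^2/b^2 < \kappa_3/(44\kappa_2\varepsilon^2)$ (this is exactly where the smallness assumption of the theorem is used to absorb the $\varepsilon^2 a^2c^2/b^2$ contributions); (ii) the $|\hat z|^2$ coefficient remains strictly negative; and (iii) the residual disturbance terms only involve $\bar z(t/\varepsilon)^2$ and $\bar y_{tx}(t,0)^2$ with coefficients of size $O(\varepsilon^2)$ and $O(1)$ respectively. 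This yields a differential inequality of the form
$$\tfrac{d}{dt}V_4(\hat v,\hat z) \leq -\mu_1\|\hat v\|^2_{L^2(0,L)} -\mu_2|\hat z|^2 + O(\varepsilon^2)\,\bar z(t/\varepsilon)^2 + O(1)\,\bar y_{tx}(t,0)^2,$$
whence, via \eqref{lem:V_4} and Grönwall's lemma,
$$V_4(\hat v,\hat z)(t) \leq e^{-\mu_3 t} V_4(\hat v_0,\hat z_0) + O(\varepsilon^2)\int_0^t e^{-\mu_3(t-s)}\bar z(s/\varepsilon)^2\,ds + O(1)\int_0^t e^{-\mu_3(t-s)}\bar y_{tx}(s,0)^2\,ds.$$

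To control the two integrals I would proceed as in the first Tikhonov proof. For the first one, I use $\bar z(\tau) = e^{b\tau}\bar z_0$ with $b<0$, change variables $\bar s = s/\varepsilon$, and exploit (as for \eqref{eq:hidden-regularity2}) the fact that $\mu_3 \leq \lambda/(\varepsilon\bar c)$, obtaining a bound $O(\varepsilon^3)\,e^{-\mu_3 t}|\bar z_0|^2/\varepsilon^2$ after absorbing the smallness of $|\bar z_0|$. For the second integral the crucial new ingredient is a hidden regularity estimate on $\bar y_{tx}(t,0)$: since $\bar y$ solves the slow KdV \eqref{eq:reduced_order1} with the feedback $\bar y_x(t,L)=-(ac/b)\bar y_x(t,0)$, the time derivative $\bar y_t$ satisfies the same linear equation with the same boundary structure, so that applying Proposition \ref{prop:ISS-Lyap} to $W(\bar y_t)$ under the smallness condition $a^2c^2/b^2 < \kappa_3/\kappa_2$ (already present in Theorem \ref{th:singular2}) produces an exponential decay of $W(\bar y_t)$ together with an integrated trace estimate $\int_0^t e^{-\mu_3(t-s)}|\bar y_{tx}(s,0)|^2 ds \leq C\,e^{-\mu_3 t}\|\bar y_t(0,\cdot)\|^2_{L^2}$, which in turn is controlled by $\|\bar y_0\|^2_{H^3(0,L)}$.

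The main obstacle will be this last step, namely converting the smallness of $V_4(\hat v,\hat z)$ into the advertised smallness of $\|y-\bar y\|_{H^3}$. Since $\hat v = \varepsilon \hat y_t$ and $\hat y$ satisfies a homogeneous KdV equation with boundary data $\hat y_x(t,L) = a(\hat z + \bar z(t/\varepsilon))$, the elliptic identity $\hat y_x + \hat y_{xxx} = -\hat y_t = -\hat v/\varepsilon$ together with the Dirichlet conditions $\hat y(t,0)=\hat y(t,L)=0$ and the above Neumann trace yields an $H^3$-estimate of the form
$$\|\hat y(t,\cdot)\|_{H^3(0,L)} \leq C\bigl(\varepsilon^{-1}\|\hat v(t,\cdot)\|_{L^2(0,L)} + |\hat z(t)| + |\bar z(t/\varepsilon)|\bigr).$$
Combining this with $V_4(\hat v,\hat z)(t) = O(\varepsilon^3)\,e^{-\mu_4 t}$ coming from the smallness hypotheses on the initial data, and with $V_4 \geq \varepsilon(\|\hat v\|^2_{L^2}+|\hat z|^2)$, will give $\|\hat v\|_{L^2} + |\hat z| = O(\varepsilon)e^{-\mu t/2}$, hence $\|\hat y\|_{H^3} = O(1)e^{-\mu t/2}$ at worst --- so one needs to be careful with the powers of $\varepsilon$ in the initial-data assumptions (which explains the $O(\varepsilon^{5/2})$ condition on $\|y_0-\bar y_0\|_{H^3}$ and on $|z_0+cy_0'(0)/b-\bar z_0|$, and the $O(\varepsilon^{3/2})$ on $\|\bar y_0\|_{H^3}$) to recover the final $O(\varepsilon)$ bound. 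This bookkeeping of $\varepsilon$-powers between the $\hat v$-norm and the $\hat y$-norm is the delicate point of the argument.
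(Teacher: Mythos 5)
Your overall strategy is exactly the one the paper follows: differentiate $V_4$ along \eqref{eq:EDP-Tikhonov2}--\eqref{eq:EDO-Tikhonov2} using Proposition \ref{prop:ISS-Lyap}, absorb the $\hat v_x(t,0)^2$ trace with the hypothesis $\tfrac{a^2c^2}{b^2}<\tfrac{\kappa_3}{44\kappa_2\varepsilon^2}$, apply Gr\"onwall, estimate the $\bar z(\cdot/\varepsilon)$ integral from the explicit exponential and the $\bar y_{tx}(\cdot,0)$ integral from a hidden-regularity bound for $\bar v=\varepsilon\bar y_t$ (which solves the same KdV with boundary feedback $-\tfrac{ac}{b}\bar v_x(t,0)$), and finally recover $\Vert\hat y\Vert_{H^3}$ from $\Vert\hat v\Vert_{L^2}$ through the identity $\hat y_x+\hat y_{xxx}=-\hat v/\varepsilon$. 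Your elliptic-recovery step is in fact stated more carefully than in the paper, which silently drops the boundary contribution $|\hat z|+|\bar z(t/\varepsilon)|$ in the inequality $V_4\geq O(\varepsilon^3)\Vert\hat y\Vert^2_{H^3}$.

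There is, however, a concrete error in your $\varepsilon$-bookkeeping, and it is precisely the one that prevents your argument from closing. You assign the residual terms the coefficients $O(\varepsilon^2)$ for $\bar z(t/\varepsilon)^2$ and $O(1)$ for $\bar y_{tx}(t,0)^2$; this mirrors the first Tikhonov theorem, but here the roles are reversed. In the boundary datum $d_2(t)=ab\hat z(t)+\varepsilon\tfrac{ac}{b}\bigl(\hat v_x(t,0)+\bar y_{tx}(t,0)\bigr)+ab\bar z(t/\varepsilon)$ the term $\bar z(t/\varepsilon)$ carries \emph{no} factor of $\varepsilon$ while $\bar y_{tx}(t,0)$ carries a factor $\varepsilon$, so after squaring one obtains
\begin{equation*}
\frac{d}{dt}V_4(\hat v,\hat z)\leq -\lambda\Vert\hat v\Vert^2_{L^2(0,L)}-\tfrac12\kappa_2a^2b^2\hat z^2
+4\kappa_2a^2b^2\,\bar z\!\left(\tfrac{t}{\varepsilon}\right)^2+\tfrac{44\varepsilon^2\kappa_2a^2c^2}{b^2}\,\bar y_{tx}(t,0)^2,
\end{equation*}
i.e.\ $O(1)$ on $\bar z^2$ and $O(\varepsilon^2)$ on $\bar y_{tx}^2$. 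With your swapped coefficients the third integral contributes $O(1)\cdot\Vert\bar y_0\Vert^2_{H^3}=O(\varepsilon^3)$, which is why you only reach $V_4=O(\varepsilon^3)$ and hence $\Vert\hat y\Vert_{H^3}=O(1)$ ``at worst''; you flag this but do not resolve it. With the correct coefficients every contribution is $O(\varepsilon^5)$: the initial data give $V_4(\hat v_0,\hat z_0)=O(\varepsilon^5)$ (note $\Vert\hat v_0\Vert_{L^2}\leq\varepsilon\Vert\hat y_0\Vert_{H^3}=O(\varepsilon^{7/2})$), the $\bar z$ integral gives $O(1)|\bar z_0|^2=O(\varepsilon^5)$, and the $\bar y_{tx}$ integral gives $O(\varepsilon^2)\Vert\bar y_0\Vert^2_{H^3}=O(\varepsilon^5)$. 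Then $V_4\geq O(\varepsilon^3)\Vert\hat y\Vert^2_{H^3}$ yields exactly the advertised $\Vert\hat y\Vert_{H^3}=O(\varepsilon)e^{-\mu t}$. So the gap is not in the strategy but in this sign-of-$\varepsilon$-power accounting, and once the two coefficients are interchanged your proof coincides with the paper's.
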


\begin{proof}
Using Proposition \ref{prop:ISS-Lyap} with $d_1=0$, $d_2 = ab \hat z(t) + \varepsilon\frac{ac}{b}\left(v_x(t,0) + \bar y_{tx}(t,0))\right) + ab \bar z\left(\frac{t}{\varepsilon}\right)$, the time derivative of $V_4$ along strong solutions to \eqref{eq:EDO-Tikhonov2}-\eqref{eq:EDP-Tikhonov2} yields
\begin{multline}
    \frac{d}{dt} V_4(\hat v,\hat z) \leq - \lambda \Vert \hat v\Vert^2_{L^2(0,L)} + \kappa_2 \left(ab \hat z(t) + \varepsilon \frac{ac}{b}\left(v_x(t,0) + \bar y_{tx}(t,0)\right) + ab \bar z\left(\frac{t}{\varepsilon}\right)\right)^2\\
    - 3\kappa_2 a^2 b^2 \hat z^2 - \frac{3\varepsilon \kappa_2 a^2}{c} \frac{1}{b}(\hat v_x(t,0) + \bar y_{tx}(t,0))b\hat z(t).
\end{multline}
Using Young's Lemma we get
\begin{multline}
\kappa_2 \left(ab \hat z(t) + \frac{ac}{b}\left(v_x(t,0) + \bar y_{tx}(t,0)\right) + ab \bar z\left(\frac{t}{\varepsilon}\right)\right)^2\\ \leq  2\kappa_2 a^2b^2 \hat z(t)^2 
+ 4 \kappa_2 \varepsilon^2 \frac{a^2c^2}{b^2}\left(v_x(t,0) + \bar y_{tx}(t,0)\right)^2 
+ 4 \kappa_2 a^2b^2 \bar z\left(\frac{t}{\varepsilon}\right)^2\\
\leq  2\kappa_2 a^2b^2 \hat z(t)^2 
+ 8 \kappa_2 \varepsilon^2 \frac{a^2c^2}{b^2}\left(\hat v_x(t,0)^2 + \bar y_{tx}^2(t,0)\right) 
 + 4 \kappa_2 a^2 b^2 \bar z\left(\frac{t}{\varepsilon}\right)^2
\end{multline}
and
\begin{equation}
\frac{3\kappa_2 a^2}{c} \frac{1}{b}(\hat v_x(t,0) + \bar y_{tx}(t,0))b\hat z(t)\leq \frac{6\varepsilon^2 \kappa_2 a^2c^2}{\alpha  b^2}(\hat v_x(t,0)^2 + \bar y^2_{tx}(t,0)) + 3 a^2b^2 \alpha \kappa_2 \hat z(t)^2. 
\end{equation}
Let us take $\alpha = \frac{1}{6}$ and gather all the above inequalities to get
\begin{equation}
\begin{split}
 \frac{d}{dt} V_4(\hat v,\hat z) \leq & - \lambda \Vert \hat v\Vert^2_{L^2(0,L)} - \frac{1}{2} \kappa_2 a^2 b^2 \hat z^2 + \left(\frac{44\varepsilon^2 \kappa_2 a^2c^2}{b^2} - \kappa_3\right) v_x(t,0)^2 \\
 & + 4 \kappa_2 a^2 b^2 \bar z\left(\frac{t}{\varepsilon}\right)^2 + \frac{44\varepsilon^2 \kappa_2 a^2c^2}{b^2} \bar y_{tx}(t,0)^2.
 \end{split}
 \end{equation}
 Choosing $\frac{a^2c^2}{b^2} \leq \frac{\kappa_3}{44\kappa_2\varepsilon^2}$ as in the statement of the theorem, one obtains for all solutions to \eqref{eq:EDP-Tikhonov2}-\eqref{eq:EDO-Tikhonov2}
\begin{equation}
\frac{d}{dt} V_4(\hat v,\hat z) \leq- \lambda \Vert \hat v\Vert^2_{L^2(0,L)} - \frac{1}{2} \kappa_2 a^2 b^2  \hat z^2 + 4 \kappa_2 a^2 b^2 \bar z\left(\frac{t}{\varepsilon}\right)^2 + \frac{44\varepsilon^2 \kappa_2 a^2c^2}{b^2} \bar y_{tx}^2(t,0). 
\end{equation}
Denoting by $\mu:=\min\left(\frac{\lambda}{\overline c},-\frac{1}{6\varepsilon b}\right)$, where $\overline c$ comes from Proposition \ref{prop:ISS-Lyap}, and using the Gr\"onwall's Lemma, one obtains, for all $t\geq 0$
\begin{equation}
V_4(\hat v,\hat z) \leq e^{-\mu t} V_4(\hat v_0,\hat z_0) + \int_0^t e^{-\mu(t-s)} \left(O(1) \bar z\left(\frac{s}{\varepsilon}\right)^2 + O(\varepsilon^2) \bar y_{tx}^2(s,0)\right)ds.
\end{equation}
On one hand, one can prove that, for all $t\geq 0$
$$
\int_0^t e^{-\mu(t-s)}  \left|\bar{z}\left(\frac{s}{\varepsilon}\right)\right|^2 ds \leq O(1) e^{b\frac{t}{\varepsilon}} |\bar z_0|^2.
$$
On the other hand, consider the variable $\bar v\left(t,x\right)= \varepsilon \bar y_t(t,x)$. It satisfies the following KdV equation
\begin{equation}
\label{eq:reduced_order_regular1}
\left\{
\begin{array}{cl}
  & \bar v_t + \bar v_x + \bar v_{xxx} = 0,\: (t,x)\in\mathbb R_+\times [0,L], \\
   &  \bar v(t,0) = \bar v(t,L) = 0,\: t\in\mathbb R_+,\\
   & \bar v_x(t,L) = -\frac{ac}{b} \bar v_x(t,0),\: t\in\mathbb R_+,\\
   & \bar v_x(0,x) = \bar v_0(x),\: x\in [0,L].
\end{array}
\right.
\end{equation}

Using the ISS-Lyapunov functional given in Proposition \ref{prop:ISS-Lyap} with $d_2(t) = -\frac{ac}{b}\bar v_x(t,0)$ along the solutions to \eqref{eq:reduced_order_regular1}, one obtains
\begin{equation}
\frac{d}{dt} W(\bar v)\leq -\lambda \Vert \bar v\Vert^2_{L^2(0,L)} + \left(\frac{a^2c^2}{b^2}\kappa_2-\kappa_3\right) |\bar v_x(t,0)|^2.
\end{equation}
Under the condition on $a$, $c$ and $b$, there exists $\kappa_4>0$ such that
\begin{equation}
\frac{d}{dt} W(\bar v) \leq - \lambda \Vert \bar v\Vert^2_{L^2(0,L)} - \kappa_4 \bar |v_x(t,0)|^2.
\end{equation}
Using Proposition \ref{prop:ISS-Lyap} and since $\mu\leq \frac{\lambda}{\overline c}$, one has, for all strong solution to \eqref{eq:reduced_order_regular1}
\begin{equation}
\frac{d}{dt} W(\bar v) \leq - \mu W(\bar v) - \kappa_4 v_x(t,0)^2.
\end{equation}
Thanks to Gr\"onwall's Lemma, one gets
\begin{equation}
\int_0^t e^{-\mu(t-s)} |v_x(s,0)|^2 ds \leq \frac{1}{\kappa_4} e^{-\mu t } W(\bar v_0).
\end{equation}
Therefore, using the smallness conditions given in the statement of the theorem, one has for all $t\geq 0$
\begin{equation}
\begin{split}
V_4(\hat v,\hat z) \leq &e^{-\mu t} V_4(\hat v_0,\hat z_0) + O(1) e^{bt} |\bar z_0|^2 + O(\varepsilon^2) e^{-\mu t}\|\bar y_0\|^2_{H^3}\\
\leq & e^{-\mu_1 t} O(\varepsilon^5),
\end{split}
\end{equation}
where $\mu_1=\min (\mu,-b)$.

Due to inequality \eqref{lem:V_4}, one has $V_4(\hat v,\hat z)\geq O(\varepsilon)(\Vert \hat v\Vert^2_{L^2(0,L)} + |\hat z|^2)$. Moreover, recall that $\hat v = \varepsilon y_t = - \varepsilon (y_x + y_{xxx})$. Hence, $V_4(\hat v,\hat z)\geq O(\varepsilon^3) (\Vert \hat y\Vert^2_{H^3(0,L)} + |\hat z|^2)$. Then, one can deduce the desired result concluding the proof. \end{proof}

\section{Conclusion}
\label{sec_conclusion}

In this paper, we have provided a singular perturbation analysis for two coupled systems composed by a KdV equation and an ODE. In particular, we have proved that, the conditions for the reduced order system and the boundary layer system to be exponentially system also work for the full-system for $\varepsilon$ small enough. Different Lyapunov functionals have been introduced for the cases where the KdV equation is faster or the ODE is faster. For both cases, the ISS Lyapunov functional built in \cite{balogoun2021iss} has been instrumental. It is also worth mentioning that, when the ODE is faster than the KdV equation, the perturbation analysis can be performed only for sufficiently smooth solutions.


\appendix 

\section{ISS-Lyapunov functional}

\label{sec_ISS}

This appendix recalls a crucial result provided in \cite{balogoun2021iss}, which proposes the construction of a ISS-Lyapunov functional for the KdV equation. This result will be instrumental all along this paper.  To introduce it, let us focus on the following disturbed KdV equation
\begin{equation}
\label{eq:KdV_disturbed}
\left\{
\begin{array}{cl}
     &y_t + y_x + y_{xxx}=d_1(t,x),\quad (t,x)\in \mathbb R_+ \times (0,L),\\
     &y(t,0) = y(t,L) = 0,\quad t\in\mathbb R_+,\\
     &y_x(t,L) = d_2(t),\quad t\in \mathbb R_+,\\
     &y(0,x) = y_0(x),\quad x\in [0,L],
\end{array}
\right.
\end{equation}
where $d_1\in L^2(0,T;L^2(0,L))$ and $d_2\in L^2(0,T)$, for any $T\geq 0$. The well-posedness of \eqref{eq:KdV_disturbed} can be obtained using semigroup theory in standard way for strong or mild solutions, depending on the regularity of the data. According to \cite[Theorem 2.3]{balogoun2021iss}, one has the following result.
\begin{proposition}
\label{prop:ISS-Lyap}
There exists an ISS-Lyapunov functional for \eqref{eq:KdV_disturbed}, i.e. there exists a function $W:L^2(0,L) \rightarrow \mathbb R$ and positive constants $\lambda,\kappa_1,\kappa_2,\kappa_3,\overline c,\underline c$ such that
\begin{equation}
\underline c \Vert y\Vert^2_{L^2(0,L)}\leq W(y)\leq \overline c \Vert y\Vert^2_{L^2(0,L)}
\end{equation}
and the derivative of $W$ along the solutions to \eqref{eq:KdV_disturbed} satisfies
\begin{equation}
\frac{d}{dt} W(y) \leq - \lambda \Vert y\Vert^2_{L^2(0,L)} + \kappa_1 \Vert d_1(t,\cdot)\Vert^2_{L^2(0,L)} + \kappa_2 |d_2(t)|^2 -\kappa_3 |y_x(t,0)|^2.
\end{equation}
\end{proposition}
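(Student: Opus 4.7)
The target is to exhibit a functional $W:L^2(0,L)\to\mathbb{R}$, equivalent to $\Vert y\Vert_{L^2(0,L)}^2$, whose derivative along the disturbed flow \eqref{eq:KdV_disturbed} absorbs the forcings $d_1,d_2$ while retaining strict dissipation in $\Vert y\Vert_{L^2(0,L)}^2$ and $y_x(t,0)^2$.

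Step 1 (naive candidate). I would first compute the derivative of $V_0(y):=\int_0^L y^2\,dx$ along \eqref{eq:KdV_disturbed}. Two integrations by parts, using $y(t,0)=y(t,L)=0$ and $y_x(t,L)=d_2(t)$, give
$$
\frac{d}{dt}V_0(y)=d_2(t)^2-y_x(t,0)^2+2\int_0^L y\,d_1\,dx.
$$
Young's inequality on the last term produces the desired $\kappa_1\Vert d_1\Vert^2_{L^2(0,L)}$ contribution, and the signs on $d_2^2$ and $-y_x(t,0)^2$ are already correct. What is missing is the strict interior dissipation $-\lambda\Vert y\Vert_{L^2(0,L)}^2$: $V_0$ is only a \emph{weak} Lyapunov, because the semigroup is conservative in $L^2$ and dissipates only through the boundary trace at $x=0$.

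Step 2 (strictification). Under the standing hypothesis $L\notin\mathcal{N}$, Rosier's observability inequality says that the trace $y_x(\cdot,0)$ on a finite horizon controls the entire $L^2$-norm. Following the strictification philosophy of \cite{praly2019}, I would add to $V_0$ a correction of the form $V_1(y)=\int_0^L\psi(x)\,y(x)^2\,dx$ for a suitable weight $\psi$, or alternatively $V_1(y)=\bigl(\int_0^L m(x)y(x)\,dx\bigr)^2$ for a suitable $m$, where $\psi$ (or $m$) is the solution of an auxiliary Sylvester-type boundary-value problem analogous to \eqref{eq:bdv-M}. The point of $V_1$ is that, upon differentiating along the undisturbed KdV flow and integrating by parts, the cross terms produced by $V_1$ combine with the leftover non-dissipative parts of $\frac{d}{dt}V_0$ to yield a strictly negative multiple of $\Vert y\Vert_{L^2(0,L)}^2$. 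Setting $W:=V_0+\eta V_1$ for a small $\eta>0$ preserves equivalence with $\Vert y\Vert_{L^2(0,L)}^2$ (this gives $\underline c$ and $\overline c$) and produces the interior dissipation $-\lambda\Vert y\Vert_{L^2(0,L)}^2$.

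Step 3 (reintroducing disturbances). Differentiating $W$ along \eqref{eq:KdV_disturbed} rather than along the homogeneous flow generates additional forcing terms through $d_1$ in the volume and through $d_2$ at the right endpoint; each is absorbed by a Young's inequality into the constants $\kappa_1$ and $\kappa_2$. A final bookkeeping step, choosing $\eta$ and the Young parameters small enough, guarantees that the coefficients of $\Vert y\Vert_{L^2(0,L)}^2$ and of $y_x(t,0)^2$ remain strictly negative, fixing $\lambda,\kappa_3>0$.

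The main obstacle is Step 2: the explicit construction of $\psi$ (or $m$) so that its cross terms \emph{exactly} cancel the non-dissipative interior contributions without spoiling either the boundary trace $-\kappa_3 y_x(t,0)^2$ or the disturbance sensitivity. This is precisely where $L\notin\mathcal{N}$ enters, via the well-posedness of the Sylvester-type auxiliary problem, and is the technical heart of the construction carried out in \cite{balogoun2021iss}; in the present paper one simply borrows the statement.
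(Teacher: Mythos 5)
The paper itself gives no proof of this proposition: it is imported verbatim from \cite[Theorem 2.3]{balogoun2021iss}, with only the remark that the additional term $-\kappa_3|y_x(t,0)|^2$ can be extracted from the proof given there, and your proposal likewise defers the construction of the strictifying functional to that reference, so you are taking essentially the same route. Your Step 1 energy identity is correct and is precisely the computation that justifies the paper's remark, since $-y_x(t,0)^2$ already appears with the right sign in the naive estimate before any strictification is performed.
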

Note that the term $-\kappa_3 |y_x(t,0)|^2$ does not appear in \cite[Theorem 2.3]{balogoun2021iss}, but following the proof in that paper, one can prove that such a term exists. It will be useful in our context.




 \bibliographystyle{plain}
\bibliography{bibsm}
\end{document}